\newtheorem{theorem}{Theorem}[section]
\newtheorem{definition}[theorem]{Definition}
\newtheorem{proposition}[theorem]{Proposition}
\numberwithin{equation}{section}
\definecolor{newcolor}{rgb}{.8,.349,.1}
\begin{document}
\begin{frontmatter}
\title{Variational Discretizations for Hamiltonian Systems}	
\author[1,2]{Yihan Shen}
\cortext[cor1]{Corresponding author at: LSEC, ICMSEC, Academy of Mathematics and Systems Science, Chinese Academy of Sciences, Beijing 100190, China}
\author[1,2]{Yajuan Sun\corref{cor1}}
\ead{sunyj@lsec.cc.ac.cn}
\address[1]{LSEC, ICMSEC, Academy of Mathematics and Systems Science, Chinese Academy of Sciences, Beijing 100190, China}
\address[2]{School of Mathematical Sciences, University of Chinese Academy of Sciences, Beijing 100049, China}

\begin{abstract}
   In this paper, we study the Lagrangian functions for a class of second-order differential systems arising from physics. For such systems, we present necessary and sufficient conditions for the existence of Lagrangian functions. Based on the variational principle and the splitting technique, we construct variational integrators and prove their equivalence to the composition of explicit symplectic methods. We apply the newly derived variational integrators to the Kepler problem and demonstrate their effectiveness in numerical simulations. Moreover, using the modified Lagrangian, we analyze the dynamical behavior of the numerical solutions in preserving the Laplace--Runge--Lenz (LRL) vector.
\end{abstract}
\begin{keyword}
    Inverse variational problem, Kepler problem, Modified Lagrangian, Noether's theorem, Variational integrator
\end{keyword}
\end{frontmatter}

\section{Introduction}
Many physical systems, such as the classical Kepler problem and charged particle systems in electromagnetic fields, are modeled by second-order differential equations. These systems typically exhibit rich conservative properties and are described by formulations with geometric structures \cite{feng1995collected, feng2010symplectic, hairer2006geometric}, such as Hamiltonian systems, volume-preserving systems, Poisson systems, and integral-preserving systems. Therefore, the numerical solutions of these systems are designed to preserve the geometric structures of the system. Such methods, which can preserve these geometric structures, have been shown to be capable of performing numerical simulations over long periods.

Variational structures are fundamental in describing systems within a variational framework. Using inverse variational techniques \cite{bampi1982inverse, santilli1978foundations, tonti1969variational}, the conditions for systems to admit a variational description can be established. Solving these systems corresponds to finding the minimum of an action functional derived from a Lagrangian. When the Lagrangian is regular, the variational formulation can be connected to the Hamiltonian description through Legendre transformations. By discretizing the Lagrangian, the discrete Euler–Lagrange equations provide variational discretizations that preserve the systems' variational structure \cite{marsden2001discrete}. Applications of variational methods include the motion of charged particles in electromagnetic fields \cite{hairer2020longterm, hairer2023leapfrog, qin2008variational, xiao2019explicit} and kinetic models in plasma physics \cite{kraus2015variational, kraus2016variational, kraus2021variational, xiao2018structure}.

The Kepler problem models the motion of a point mass in a classical gravitational potential \cite{arnold1989mathematical}. For negative total energy, the motion follows an elliptic orbit. This system is super-integrable, with conserved quantities: energy, angular momentum, and the Laplace--Runge--Lenz (LRL) vector. The LRL vector is often referred to as a "hidden conserved quantity" \cite{goldstein2011}, as it determines the orientation and shape of the Keplerian orbit. Preserving these conserved quantities numerically is crucial for maintaining the stability of simulated orbits and preventing long-term drift. Geometric integrators have demonstrated superior performance in long-term numerical simulations of the Kepler problem (e.g., \cite{chin1997symplectic, kozlov2007conservative, minesaki2002new, minesaki2004new}).

In this paper, we construct the first order and second order variational integrators for the Kepler problem by splitting its potential function. For comparison, we also use the symplectic Euler and St\"{o}rmer--Verlet methods. To analyze the dynamical behavior of these integrators, we derive their modified differential equations \cite{mclachlan2020backward, moan2006modified, vermeeren2017modified, oliver2024new}. Since variational integrators preserve the variational structure, the corresponding modified equations yield a perturbed but still conservative Kepler problem. Due to the perturbation, the modified equations are typically not super-integrable, although the LRL vector remains bounded. For the two-dimensional Kepler problem, the numerical orbits computed by the proposed integrators are shown to satisfy a perturbed Kepler problem with only two formal conservation laws. We demonstrate that the newly derived variational integrators are equivalent to compositions of explicit symplectic methods, as confirmed by efficient numerical experiments. Numerical results further show that the proposed integrators perform better than the symplectic Euler and St\"{o}rmer--Verlet methods. Using Noether's theory, along with the concepts of generalized vector fields, characteristics of vector fields, and conservation laws \cite{noether1918invariante, olver1993applications}, we analyze the errors of the LRL vector in our variational integrators. We conclude that the errors of the newly derived variational methods are smaller than those of the symplectic Euler and St\"{o}rmer--Verlet methods, confirming the accuracy of the numerical results.

The outline of this paper is as follows. Section 2 establishes the conditions under which second order systems can be described in a variational framework. In this section, for the two-dimensional Kepler problem, we also derive the variational symmetries corresponding to its three conservation laws. In Section 3, we construct the variational discretizations for both the Kepler problem and the relativistic Kepler problem, and prove the equivalence of the newly constructed variational integrators with compositions of explicit symplectic methods. In Section 4, we derive the modified Lagrangian and modified equations for second order differential equations, and analyze the errors of the proposed variational integrators in preserving conserved quantities. In Section 5, we apply the newly derived variational integrators to the Kepler problem and demonstrate the efficiency of the new methods. Finally, Section 6 concludes the paper.
	
\section{Variational Principle in Lagrangian Formalism}
\begin{definition}\label{def:varder}
    Denote $N$ as an operator on the function space ${\mathcal F}(\mathbf{R}^{n})$. Its variational derivative at $x$ with respect to $\delta x$ is defined as
    \[
        N_x \delta x = \left.\frac{d}{d\varepsilon} \right|_{\varepsilon=0} N(x + \varepsilon \delta x),
    \]
    where $x, \delta x \in {\mathcal F}(\mathbf{R}^{n})$.
\end{definition}
    
\begin{definition}\label{def:N*}
    Consider the operator $N$. The operator $N^{*}$ is called its adjoint if
    \[
        \langle N^{*} x, y \rangle = \langle x, N y \rangle, \quad \forall x, y \in {\mathcal F}(\mathbf{R}^{n}),
    \]
    where $\langle \cdot, \cdot \rangle$ denotes the inner product on ${\mathcal F}(\mathbf{R}^{n})$. Moreover, the operator $N$ is called self-adjoint if $N = N^{*}$.
\end{definition}
	
\begin{theorem}\label{Thm:Vainberg}
    Let
    \[
        N[x] := F(t, x, \dot{x}, \dots, x^{(m)}) = 0
    \]
    be a differential system, where $[x]$ contains $t$, $x$, and all the derivatives of $x$. The differential equation $N[x] = 0$ can be derived from a variational principle with the Lagrangian $L$ if the variational derivative of $N$ is self-adjoint, i.e., $N_x = N_x^*$. Furthermore, the Lagrangian $L$ can be obtained by
    \begin{equation}\label{eq:Lagrangian}
        L[x] = \int_{0}^{1} x N[\lambda x] \, d\lambda.
    \end{equation}
\end{theorem}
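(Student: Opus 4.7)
The plan is to verify directly that the functional $L$ given by the homotopy formula \eqref{eq:Lagrangian} has $N[x]$ as its Euler--Lagrange expression, so that $N[x]=0$ is precisely the stationarity condition for the action $\int L\,dt$. The self-adjointness hypothesis will be the key mechanism for converting a variation ``hidden'' inside $N[\lambda x]$ back into one acting on a free test function.

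First, I would compute the variational derivative of $L$ at $x$ in direction $\delta x$. Using Definition~\ref{def:varder} together with the chain rule, I get
\begin{equation*}
    L_x\,\delta x \;=\; \int_0^1 \delta x\, N[\lambda x]\, d\lambda \;+\; \int_0^1 x\, N_{\lambda x}(\lambda\,\delta x)\, d\lambda,
\end{equation*}
where $N_{\lambda x}$ denotes the variational derivative of $N$ evaluated at $\lambda x$. The first term already pairs $N[\lambda x]$ against $\delta x$, which is the form we want; the second term does not, since $\delta x$ sits inside $N_{\lambda x}$.

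Next, I would pass to the action $\mathcal{A}[x]=\int L[x]\,dt$ and rewrite the second term using the self-adjointness assumption. Taking the inner product on ${\mathcal F}(\mathbf{R}^n)$ and applying Definition~\ref{def:N*} with $N_{\lambda x}=N_{\lambda x}^{*}$ gives
\begin{equation*}
    \bigl\langle x,\, N_{\lambda x}(\lambda\,\delta x)\bigr\rangle \;=\; \bigl\langle N_{\lambda x}(x),\, \lambda\,\delta x\bigr\rangle,
\end{equation*}
so that
\begin{equation*}
    \bigl\langle 1,\, L_x\,\delta x\bigr\rangle \;=\; \Bigl\langle \int_0^1 \bigl( N[\lambda x] + \lambda\, N_{\lambda x}(x)\bigr)\,d\lambda,\; \delta x\Bigr\rangle.
\end{equation*}
The integrand in $\lambda$ is exactly $\tfrac{d}{d\lambda}\bigl(\lambda N[\lambda x]\bigr)$, by the product rule combined with $\tfrac{d}{d\lambda}N[\lambda x]=N_{\lambda x}(x)$. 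Integrating from $0$ to $1$ collapses the expression to $N[x]$, so $\delta \mathcal{A}[x]\cdot\delta x=\langle N[x],\delta x\rangle$, and vanishing of $\delta\mathcal{A}$ for arbitrary $\delta x$ yields $N[x]=0$.

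The only delicate step is justifying the self-adjointness move in the second bullet: one must ensure that $N_x^{*}=N_x$ for all $x$ in a neighborhood of the homotopy path $\{\lambda x:\lambda\in[0,1]\}$, and that the relevant boundary terms from integration by parts (implicit in the inner product on ${\mathcal F}(\mathbf{R}^n)$ containing derivatives of $x$) either vanish or are absorbed into an admissible class of variations with compact support. Once that functional-analytic setup is in place, the rest is just the homotopy identity $\frac{d}{d\lambda}(\lambda N[\lambda x])=N[\lambda x]+\lambda N_{\lambda x}(x)$ and the fundamental theorem of calculus.
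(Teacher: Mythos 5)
Your argument is correct: it is exactly the classical Vainberg--Tonti homotopy computation, namely $\delta\!\int L\,dt\cdot\delta x=\bigl\langle\int_0^1\frac{d}{d\lambda}\bigl(\lambda N[\lambda x]\bigr)d\lambda,\delta x\bigr\rangle=\langle N[x],\delta x\rangle$, with self-adjointness of $N_{\lambda x}$ used to move the derivative off $\delta x$, and you rightly flag the only delicate points (self-adjointness along the whole segment $\{\lambda x\}$, i.e.\ a star-shaped domain, and the disposal of boundary terms in the formal adjoint). Note that the paper itself states Theorem~\ref{Thm:Vainberg} without proof, citing the inverse-problem literature, so there is no in-paper argument to compare against; your proposal supplies precisely the standard proof that those references give.
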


As follows, we consider second order differential equations of the form
\begin{align}\label{eq:DE_VP}
    \frac{d}{dt}\left(M(x, \dot{x}) \dot{x}\right) = f(t, x, \dot{x}), \quad x \in \mathbf{R}^{n},
\end{align}
where $M(x, \dot{x})$ is assumed to be an $n \times n$ symmetric matrix.

\begin{theorem}\label{Thm:2ndOde from VP}
    The system \eqref{eq:DE_VP} can be derived from a variational principle if the following conditions hold:
    \begin{subequations}\label{eq:varcond}
        \begin{align}
            &   \sum_{k=1}^{n} \frac{\partial M_{ik}}{\partial \dot{x}_{j}} \dot{x}_k = \sum_{k=1}^{n} \frac{\partial M_{jk}}{\partial \dot{x}_{i}} \dot{x}_k, \\
            &   \sum_{k=1}^{n}  \left(\frac{\partial M_{ik}}{\partial x_{j}} + \frac{\partial M_{jk}}{\partial x_{i}}\right) \dot{x}_k = \frac{\partial f_i}{\partial \dot{x}_j} + \frac{\partial f_j}{\partial \dot{x}_i}, \\
            &  \sum_{k=1}^{n}  \frac{d}{dt}\left(\frac{\partial M_{ik}}{\partial x_j} \dot{x}_k\right) = \frac{\partial f_i}{\partial x_j} - \frac{\partial f_j}{\partial x_i} + \frac{d}{dt}\frac{\partial f_j}{\partial \dot{x}_i}
        \end{align}
    \end{subequations}
    hold for all $i, j = 1, \dots, n$.
\end{theorem}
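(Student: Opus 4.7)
The plan is to apply Theorem \ref{Thm:Vainberg} directly: writing $N[x]:=\frac{d}{dt}(M(x,\dot x)\dot x)-f(t,x,\dot x)$, I only need to verify that the three conditions (2.1a)--(2.1c) are equivalent to self-adjointness of the Fréchet derivative $N_x$ with respect to the $L^{2}$ inner product $\langle u,v\rangle=\int u\cdot v\,dt$. Once this is shown, Theorem \ref{Thm:Vainberg} supplies the Lagrangian via the homotopy formula \eqref{eq:Lagrangian}, and the proof is complete.

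First I would compute $N_x\delta x$ componentwise by differentiating $N(x+\varepsilon\delta x)$ at $\varepsilon=0$. Writing it out, the $i$-th component becomes
\[
(N_x\delta x)_i=\frac{d}{dt}\!\left[\sum_{j,k}\frac{\partial M_{ik}}{\partial x_j}\dot x_k\,\delta x_j+\sum_{j,k}\frac{\partial M_{ik}}{\partial\dot x_j}\dot x_k\,\delta\dot x_j+\sum_k M_{ik}\delta\dot x_k\right]-\sum_j\frac{\partial f_i}{\partial x_j}\delta x_j-\sum_j\frac{\partial f_i}{\partial\dot x_j}\delta\dot x_j.
\]
Expanding the outer $d/dt$ gives a linear differential operator of the form $A_{ij}\delta\ddot x_j+B_{ij}\delta\dot x_j+C_{ij}\delta x_j$ (indices summed), where $A$, $B$, $C$ depend on $M$, $f$ and their derivatives and on the trajectory $(t,x,\dot x,\ddot x)$. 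Using integration by parts, its formal adjoint acts as $(A^{T}\delta y)\cdot\cdot+(\ldots)$, and matching $\langle\delta y,N_x\delta x\rangle=\langle N_x\delta y,\delta x\rangle$ reduces to three algebraic identities on $A,B,C$, namely $A=A^{T}$; $B+B^{T}=2\dot A$; and $C-C^{T}=\dot B-\ddot A$ (these are the standard Helmholtz--type conditions for second-order systems). I would then substitute the explicit expressions for $A_{ij}$, $B_{ij}$, $C_{ij}$ in terms of $M$ and $f$.

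Collecting coefficients: the highest-order part comes from $\sum_k M_{ik}\delta\dot x_k$ together with the $\delta\dot x_j$ contribution from $\partial M_{ik}/\partial\dot x_j$, giving $A_{ij}=M_{ij}+\sum_k\frac{\partial M_{ik}}{\partial\dot x_j}\dot x_k$; symmetry $A_{ij}=A_{ji}$, combined with the symmetry of $M$ already assumed, collapses to exactly condition (2.1a). The next-order identity $B+B^{T}=2\dot A$ produces (2.1b) after the $f$-dependent terms enter through $\partial f_i/\partial\dot x_j$. Finally, the lowest-order identity yields (2.1c) after using (2.1a)--(2.1b) to simplify the time derivatives of $A$ and $B$.

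The main obstacle is the bookkeeping in the middle step: the outer $d/dt$ generates many cross terms depending on $\ddot x$, so care is needed to separate the coefficients of $\delta\ddot x$, $\delta\dot x$ and $\delta x$ cleanly, and then to recognize that after cancellations the three Helmholtz identities collapse exactly to the stated system (2.1a)--(2.1c). Once the identification is made, invoking Theorem \ref{Thm:Vainberg} closes the argument.
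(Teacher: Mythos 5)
Your proposal is correct and follows essentially the same route as the paper: both linearize $N[x]=\frac{d}{dt}(M\dot x)-f$, impose self-adjointness of $N_x$ under the $L^2$ pairing via integration by parts, match the coefficients of $\delta\ddot x$, $\delta\dot x$, $\delta x$, and then invoke Theorem \ref{Thm:Vainberg}. Your packaging of the coefficient matching as the Helmholtz-type identities $A=A^{\top}$, $B+B^{\top}=2\dot A$, $C-C^{\top}=\dot B-\ddot A$ is just a tidier bookkeeping of the same comparison the paper performs explicitly, and it does reproduce (2.1a)--(2.1c) exactly.
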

	
\begin{proof}
Let $N(x) = \frac{d}{dt} \left(M(x, \dot{x}) \dot{x}\right) - f(t, x, \dot{x})$. By Definition \ref{def:varder}, its variational derivative at $x$ is
\begin{align}\label{eq:Nx}
    N_x \delta x = \frac{d}{dt}\left[M \delta \dot{x} + \sum_{k=1}^{n} \left(\frac{\partial M}{\partial \dot{x}_k} \delta \dot{x}_k + \frac{\partial M}{\partial x_k} \delta x_k\right) \dot{x}\right] - \frac{\partial f}{\partial x} \delta x - \frac{\partial f}{\partial \dot{x}} \delta \dot{x}.
\end{align}
From Definition \ref{def:N*}, the adjoint of $N_x$ is
\begin{align}\label{eq:Nu}
    \langle N_x^* \delta x, \delta v \rangle = \langle \delta x, N_x \delta v \rangle = \left\langle
    \frac{d}{dt}\left[M \delta \dot{v} + \sum_{k=1}^{n}\left(\frac{\partial M}{\partial \dot{x}_k} \delta \dot{v}_k + \frac{\partial M}{\partial x_k} \delta v_k \right) \dot{x}\right]
    - \frac{\partial f}{\partial x} \delta v - \frac{\partial f}{\partial \dot{x}} \delta \dot{v}, \delta x \right\rangle.
\end{align}
Using integration by parts, we obtain
\begin{align}
    \langle N_x^* \delta x, \delta v \rangle = \left\langle
    \frac{d}{dt}\left[\frac{\partial}{\partial \dot{x}}\left(\dot{x}^{\top} M^{\top} \delta \dot{x}\right)\right]
    - \frac{\partial}{\partial x}\left[\dot{x}^{\top} M \delta \dot{x}\right]
    - \left(\frac{\partial f}{\partial x}\right)^{\top} \delta x
    + \frac{d}{dt}\left[\left(\frac{\partial f}{\partial \dot{x}}\right)^{\top} \delta x\right], \delta v
    \right\rangle.
\end{align}
Thus, we have
\begin{align}\label{eq:Nu*}
    N_x^* \delta x = \frac{d}{dt}\left[
    \frac{\partial}{\partial \dot{x}}\left(\dot{x}^{\top} M^{\top} \delta \dot{x}\right)
    \right]
    - \frac{\partial}{\partial x}\left[\dot{x}^{\top} M \delta \dot{x}\right]
    - \left(\frac{\partial f}{\partial x}\right)^{\top} \delta x
    + \frac{d}{dt}\left[\left(\frac{\partial f}{\partial \dot{x}}\right)^{\top} \delta x\right].
\end{align}
According to Theorem \ref{Thm:Vainberg}, the system can be derived from a variational principle if $N_x \delta x = N_x^* \delta x$. Comparing \eqref{eq:Nx} and \eqref{eq:Nu*} leads to
\begin{align*}
    &\left[M_{ij} +  \sum_{k=1}^{n} \frac{\partial M_{ik}}{\partial \dot{x}_{j}} \dot{x}_k - M_{ji} -  \sum_{k=1}^{n} \frac{\partial M_{jk}}{\partial \dot{x}_{i}} \dot{x}_k\right] \delta \ddot{x}_{j} \\
    +&\left[\frac{d}{dt}\left(M_{ij} +  \sum_{k=1}^{n} \frac{\partial M_{ik}}{\partial \dot{x}_{j}} \dot{x}_k - M_{ji} - \sum_{k=1}^{n} \frac{\partial M_{jk}}{\partial \dot{x}_{i}} \dot{x}_k\right)
    + \sum_{k=1}^{n}  \left(\frac{\partial M_{ik}}{\partial x_{j}} + \frac{\partial M_{jk}}{\partial x_{i}}\right) \dot{x}_k - \frac{\partial f_{i}}{\partial \dot{x}_{j}} - \frac{\partial f_{j}}{\partial \dot{x}_{i}}\right] \delta \dot{x}_{j} \\
    +& \left[\frac{d}{dt} \sum_{k=1}^{n} \frac{\partial M_{ik}}{\partial x_{j}} \dot{x}_k - \frac{\partial f_{i}}{\partial x_{j}} + \frac{\partial f_{j}}{\partial x_{i}} - \frac{d}{dt}\frac{\partial f_{j}}{\partial \dot{x}_{i}}\right] \delta x_{j} = 0.
\end{align*}
Since $M$ is symmetric, this equality implies exactly that
\[
\begin{aligned}
    & \sum_{k=1}^{n} \left(\frac{\partial M_{ik}}{\partial \dot{x}_{j}} \dot{x}_k - \frac{\partial M_{jk}}{\partial \dot{x}_{i}} \dot{x}_k \right) = 0, \\
    & \sum_{k=1}^{n} \left(\frac{\partial M_{ik}}{\partial x_{j}} + \frac{\partial M_{jk}}{\partial x_{i}}\right) \dot{x}_k - \frac{\partial f_{i}}{\partial \dot{x}_{j}} - \frac{\partial f_{j}}{\partial \dot{x}_{i}} = 0, \\
    & \frac{d}{dt}\left( \sum_{k=1}^{n} \frac{\partial M_{ik}}{\partial x_{j}} \dot{x}_k \right) - \frac{\partial f_{i}}{\partial x_{j}} + \frac{\partial f_{j}}{\partial x_{i}} - \frac{d}{dt} \frac{\partial f_{j}}{\partial \dot{x}_{i}} = 0.
\end{aligned}
\]
This completes the proof of the theorem.
\end{proof}
	
\begin{theorem}\label{Thm:M=M(dot x)}
    Suppose $M = M(\dot{x})$ and $f(t,x,\dot{x}) = A(t,x)\dot{x} + \varphi(t,x)$, then the system \eqref{eq:DE_VP} can be derived from a variational principle if the following conditions hold:
    \begin{subequations}\label{eq:varcond1}
        \begin{align}
            & \sum\limits_{k=1}^{n} \frac{\partial M_{ik}(\dot{x})}{\partial \dot{x}_j} \dot{x}_k = \sum\limits_{k=1}^{n} \frac{\partial M_{jk}(\dot{x})}{\partial \dot{x}_i} \dot{x}_k, \quad \forall i,j = 1, \dots, n, \\
            & A(t,x) = - A^{\top}(t,x), \\
            & \frac{\partial A_{jk}}{\partial x_i} + \frac{\partial A_{ki}}{\partial x_j} + \frac{\partial A_{ij}}{\partial x_k} = 0, \quad \forall i,j,k = 1, \dots, n, \\
            & \frac{\partial \varphi_i}{\partial x_j} - \frac{\partial \varphi_j}{\partial x_i} = \frac{\partial A_{ij}}{\partial t}, \quad \forall i,j = 1, \dots, n. \label{eq:M(dot x)3}
        \end{align}
    \end{subequations}
\end{theorem}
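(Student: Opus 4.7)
The plan is to specialize each of the three conditions of Theorem \ref{Thm:2ndOde from VP} (collected in \eqref{eq:varcond}) to the present hypotheses $M=M(\dot x)$ and $f(t,x,\dot x)=A(t,x)\dot x+\varphi(t,x)$, and then read off the four conditions of \eqref{eq:varcond1} one by one.

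First, since $M$ has no explicit $x$-dependence, the partials $\partial M_{ik}/\partial x_j$ all vanish. Consequently the first equation of \eqref{eq:varcond} is identical to the first condition of \eqref{eq:varcond1}, and the left-hand sides of the second and third equations of \eqref{eq:varcond} drop out entirely. From the affine form of $f$ I will compute $\partial f_i/\partial\dot x_j=A_{ij}(t,x)$; substituting into the second equation of \eqref{eq:varcond} then yields $A_{ij}+A_{ji}=0$, i.e.\ the antisymmetry $A=-A^\top$.

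The third equation of \eqref{eq:varcond} requires a short calculation. I will expand
\[
    \frac{\partial f_i}{\partial x_j}=\sum_{k=1}^{n}\frac{\partial A_{ik}}{\partial x_j}\dot x_k+\frac{\partial\varphi_i}{\partial x_j},\qquad \frac{d}{dt}\left(\frac{\partial f_j}{\partial\dot x_i}\right)=\frac{dA_{ji}}{dt}=\frac{\partial A_{ji}}{\partial t}+\sum_{k=1}^{n}\frac{\partial A_{ji}}{\partial x_k}\dot x_k,
\]
and equate like powers of the independent jet coordinates $\dot x_1,\dots,\dot x_n$. The $\dot x$-independent part gives $\partial\varphi_i/\partial x_j-\partial\varphi_j/\partial x_i=-\partial A_{ji}/\partial t=\partial A_{ij}/\partial t$, which is exactly \eqref{eq:M(dot x)3}. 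The coefficient of $\dot x_k$ gives
\[
    \frac{\partial A_{ik}}{\partial x_j}-\frac{\partial A_{jk}}{\partial x_i}+\frac{\partial A_{ji}}{\partial x_k}=0,
\]
and an application of the antisymmetry relations $A_{ji}=-A_{ij}$ and $A_{ik}=-A_{ki}$ rewrites this as the cyclic identity stated in the third condition of \eqref{eq:varcond1}.

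The only real obstacle is the bookkeeping in this last step: one must use the antisymmetry of $A$ twice, in a consistent way, to turn the three partials with index patterns $(ik,j),(jk,i),(ji,k)$ into the symmetric cyclic pattern $(jk,i),(ki,j),(ij,k)$ required by \eqref{eq:varcond1}. Once this rearrangement is verified, the theorem follows by direct appeal to Theorem \ref{Thm:2ndOde from VP}; no further analytic content is needed beyond what has already been established there.
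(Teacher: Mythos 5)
Your proposal is correct and follows essentially the same route as the paper: specialize the conditions \eqref{eq:varcond} of Theorem \ref{Thm:2ndOde from VP} to $M=M(\dot x)$ and $f=A\dot x+\varphi$, read off (\ref{eq:varcond1}a) and the antisymmetry of $A$ directly, and split the remaining condition by powers of the independent velocities $\dot x_k$ to obtain (\ref{eq:varcond1}c) and \eqref{eq:M(dot x)3}. The only difference is that you carry out the antisymmetry-based index rearrangement explicitly, which the paper leaves implicit in its displayed reduced equation.
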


\begin{proof}
    Under the assumption of the theorem, substitute $M = M(\dot{x})$ and $f(t,x,\dot{x}) = A(t,x)\dot{x} + \varphi(t,x)$ into conditions \eqref{eq:varcond}, it is clear that the terms about the partial derivatives of $M$ with respect to $x$ vanish. The equalities (\ref{eq:varcond1}a) and (\ref{eq:varcond1}b) follow from (\ref{eq:varcond}a) and (\ref{eq:varcond}b). The condition (\ref{eq:varcond}c) for $i,j = 1, \dots, n$ is reduced to
    \[
    \sum\limits_{k=1}^{n}\left( \frac{\partial A_{jk}}{\partial x_i} + \frac{\partial A_{ki}}{\partial x_j} + \frac{\partial A_{ij}}{\partial x_k} \right) \dot{x}_k = \frac{\partial A_{ji}}{\partial t} + \frac{\partial \varphi_i}{\partial x_j} - \frac{\partial \varphi_j}{\partial x_i}.
    \]
    In the above equality, the right-hand side is independent of $\dot{x}$, thus the coefficients of the term $\dot{x}$ must vanish. This leads to equalities (\ref{eq:varcond1}c) and (\ref{eq:varcond1}d).
\end{proof}

Especially, if $M$ is a constant symmetric matrix, by Theorem \ref{Thm:2ndOde from VP}, the conditions for the variational description of system \eqref{eq:DE_VP} only depend on $f$.

\begin{theorem}\label{Thm:M}
    Suppose $M$ is a constant symmetric matrix. Then the system \eqref{eq:DE_VP} can be derived from a variational principle if the following conditions are satisfied:
    \begin{subequations}\label{eq:varcond2}
        \begin{align}
            & \frac{\partial f_i}{\partial \dot{x}_j} + \frac{\partial f_j}{\partial \dot{x}_i} = 0, \\
            & \frac{\partial f_i}{\partial x_j} - \frac{\partial f_j}{\partial x_i} + \frac{d}{dt} \frac{\partial f_i}{\partial \dot{x}_j} = 0.
        \end{align}
    \end{subequations}
\end{theorem}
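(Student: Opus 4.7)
The plan is to view this as a direct specialization of Theorem \ref{Thm:2ndOde from VP} to the case in which $M$ is a constant symmetric matrix. In that setting every partial derivative of $M$, both with respect to $x_k$ and with respect to $\dot{x}_k$, vanishes identically, so each of the three conditions in \eqref{eq:varcond} simplifies dramatically. I would begin by simply plugging $\partial M_{ik}/\partial x_j = 0$ and $\partial M_{ik}/\partial \dot{x}_j = 0$ into (\ref{eq:varcond}a)--(\ref{eq:varcond}c) and reading off what survives.

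First, (\ref{eq:varcond}a) reduces to $0=0$ and carries no information. Next, the left-hand side of (\ref{eq:varcond}b) disappears, leaving exactly
\[
\frac{\partial f_i}{\partial \dot{x}_j} + \frac{\partial f_j}{\partial \dot{x}_i} = 0,
\]
which is (\ref{eq:varcond2}a). Finally, the total time derivative on the left-hand side of (\ref{eq:varcond}c) vanishes as well, so that condition collapses to
\[
\frac{\partial f_i}{\partial x_j} - \frac{\partial f_j}{\partial x_i} + \frac{d}{dt}\frac{\partial f_j}{\partial \dot{x}_i} = 0.
\]
Invoking the skew-symmetry relation just obtained, one replaces $\partial f_j/\partial \dot{x}_i$ by $-\partial f_i/\partial \dot{x}_j$, and, after a possible swap of the index pair $(i,j)$ to normalize signs, rewrites this as (\ref{eq:varcond2}b).

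The whole argument is essentially a substitute-and-simplify corollary of Theorem \ref{Thm:2ndOde from VP}; the heavy lifting has been done upstream in the self-adjointness calculation. There is no real conceptual obstacle here, and the only point that requires care is the bookkeeping of signs and indices in the last step, where one must combine the skew-symmetry identity with the reduced form of (\ref{eq:varcond}c) to match the precise shape of (\ref{eq:varcond2}b). I would therefore present the proof as a short paragraph deducing the statement from Theorem \ref{Thm:2ndOde from VP}, rather than redoing the adjoint computation from scratch.
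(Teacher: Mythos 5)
Your overall route---specializing Theorem \ref{Thm:2ndOde from VP} by setting every derivative of $M$ to zero---is exactly how the paper itself arrives at Theorem \ref{Thm:M} (the paper gives no separate proof, only the remark preceding the statement), and your reductions of (\ref{eq:varcond}a) and (\ref{eq:varcond}b) are fine. The gap is in your last step. Specializing (\ref{eq:varcond}c) gives
\[
\frac{\partial f_i}{\partial x_j} - \frac{\partial f_j}{\partial x_i} + \frac{d}{dt}\frac{\partial f_j}{\partial \dot{x}_i} = 0,
\]
and after substituting $\partial f_j/\partial \dot{x}_i = -\,\partial f_i/\partial \dot{x}_j$ from (\ref{eq:varcond2}a) this reads $S_{ij} - T_{ij} = 0$, where $S_{ij} = \partial f_i/\partial x_j - \partial f_j/\partial x_i$ and $T_{ij} = \frac{d}{dt}\,\partial f_i/\partial \dot{x}_j$. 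You then assert that a swap of the pair $(i,j)$ turns this into the printed condition (\ref{eq:varcond2}b), which is $S_{ij} + T_{ij} = 0$. It does not: under (\ref{eq:varcond2}a) both $S$ and $T$ are antisymmetric in $(i,j)$, so swapping indices merely multiplies the identity $S_{ij}-T_{ij}=0$ by $-1$ and reproduces the same family of equations. The conditions $S=T$ and $S=-T$ are genuinely different unless $T\equiv 0$, i.e.\ unless $f$ is independent of $\dot{x}$ (as for the Kepler problem, which is why the later applications are unaffected).

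Moreover, $S_{ij}=T_{ij}$ is the classical Helmholtz condition, and a standard example shows the printed sign cannot be recovered: for $L=\tfrac12|\dot{x}|^2 + a(t,x)^{\top}\dot{x} - V(t,x)$ the Euler--Lagrange equation has $f_i=\sum_j B_{ij}\dot{x}_j+\varphi_i$ with $B_{ij}=\partial a_j/\partial x_i-\partial a_i/\partial x_j$, and there one computes $S_{ij}=T_{ij}=\frac{d}{dt}B_{ij}$, which is nonzero in general, so $S_{ij}+T_{ij}\neq 0$. The honest output of your specialization is therefore the condition with $+\frac{d}{dt}\frac{\partial f_j}{\partial \dot{x}_i}$ (equivalently $-\frac{d}{dt}\frac{\partial f_i}{\partial \dot{x}_j}$); the ``$+$'' in (\ref{eq:varcond2}b) as printed appears to be a sign or index slip in the statement. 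Your write-up should either state the condition in that corrected form or explicitly flag the discrepancy, rather than claim that index bookkeeping resolves it---as written, the final rewriting step is false.
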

	
	\noindent{\bf Kepler problem.} The Kepler problem describes the motion of two bodies attracting each other. If one body is placed at the origin, and $x$ represents the position of the other body, their dynamics are governed by the following system:
\begin{equation}\label{eq:Kepler}
    \ddot{x} = -\nabla \phi(x) = -\frac{x}{|x|^3}, \quad x \in \mathbf{R}^{n},
\end{equation}
where $\phi(x) = -\frac{1}{|x|}$. It is obvious that the Kepler problem \eqref{eq:Kepler} satisfies Theorem \ref{Thm:M}, and its Lagrangian is given by
\begin{align}\label{eq:Lagrangian_Kepler}
    L(x, \dot{x}) = \frac{1}{2} \dot{x}^\top \dot{x} + \frac{1}{|x|}.
\end{align}

The Kepler problem, which has a rich set of conservative laws, is a super-integrable system. For the $n$-dimensional Kepler problem \eqref{eq:Kepler}, we list the following conservation laws.

\begin{proposition}\label{Constant_Kepler}
    Consider the Kepler system \eqref{eq:Kepler}. The system can preserve the following conservative quantities:
    \begin{itemize}
        \item \textbf{Energy}
        \[
        H(x, \dot{x}) = \frac{1}{2} \dot{x}^\top \dot{x} - \frac{1}{|x|}.
        \]
        \item \textbf{Angular Momentum}
        \[
        \mathbf{m}(x, \dot{x}) = (m_{ij}(x, \dot{x}))_{n \times n},
        \]
        where $m_{ij}(x, \dot{x}) = x_{i} \dot{x}_{j} - x_{j} \dot{x}_{i}, \quad 1 \leq i < j \leq n.$
        When $n = 2$, $m = x_1 \dot{x}_2 - x_2 \dot{x}_1$; when $n = 3$, $\mathbf{m}$ can be expressed in compact form as
        \[
        \mathbf{m} = \dot{x} \times x.
        \]
        \item \textbf{Laplace--Runge--Lenz (LRL) Vector}
        \[
        \mathbf{A}(x, \dot{x}) = (A_i(x, \dot{x}))_{n \times 1},
        \]
        where $A_{i}(x, \dot{x}) = x_{i} |\dot{x}|^2 - \dot{x}_{i} (x \cdot \dot{x}) - \frac{x_{i}}{|x|}, \quad i = 1, \dots, n.$
        When $n = 3$, in compact form it reads
        \[
        \mathbf{A}(x, \dot{x}) = \dot{x} \times \mathbf{m} - \frac{x}{|x|}.
        \]
    \end{itemize}
\end{proposition}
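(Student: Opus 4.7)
The plan is to verify each of the three conservation laws by direct differentiation along trajectories of \eqref{eq:Kepler}. Concretely, I will compute $\tfrac{d}{dt}$ of each quantity, substitute $\ddot{x}=-x/|x|^{3}$, and show the result vanishes identically. The Lagrangian \eqref{eq:Lagrangian_Kepler} is autonomous and rotationally invariant, so in principle the first two conservation laws could also be obtained from Noether's theorem (time translation $\Rightarrow$ energy, rotation in the $(x_i,x_j)$-plane $\Rightarrow$ $m_{ij}$); I would mention this as motivation but keep the verification computational for transparency.

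For the energy, I would write $\dot H = \dot x^{\top}\ddot x + \nabla\phi(x)\cdot\dot x$ and note that, along solutions, $\ddot x = -\nabla\phi$, which immediately gives $\dot H = 0$. For the angular momentum components, differentiating $m_{ij}=x_i\dot x_j - x_j\dot x_i$ removes the $\dot x_i \dot x_j$ terms and leaves $x_i\ddot x_j - x_j\ddot x_i$; substituting $\ddot x_k = -x_k/|x|^{3}$ makes this vanish by symmetry. In the $n=3$ case I would remark that the cross-product identity $\mathbf m = \dot x\times x$ encodes the same six scalar relations with a sign convention (so that $\dot{\mathbf m} = \ddot x\times x$ vanishes because $\ddot x \parallel x$).

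The LRL vector is the only part that needs bookkeeping, so this is where I would spend the most care. Differentiating
\[
A_i = x_i|\dot x|^2 - \dot x_i(x\cdot \dot x) - \frac{x_i}{|x|}
\]
yields six terms:
\[
\dot A_i = \dot x_i|\dot x|^2 + 2x_i(\dot x\cdot\ddot x) - \ddot x_i(x\cdot\dot x) - \dot x_i|\dot x|^2 - \dot x_i(x\cdot\ddot x) - \frac{\dot x_i}{|x|} + \frac{x_i(x\cdot\dot x)}{|x|^{3}}.
\]
The two $\dot x_i|\dot x|^2$ terms cancel. Using $\ddot x = -x/|x|^{3}$, the identities $\dot x\cdot\ddot x = -(x\cdot\dot x)/|x|^{3}$ and $x\cdot\ddot x = -1/|x|$ reduce the remaining expression to a sum of multiples of $x_i(x\cdot\dot x)/|x|^{3}$ and $\dot x_i/|x|$ whose coefficients each sum to zero. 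For the $n=3$ identity $\mathbf A = \dot x\times \mathbf m - x/|x|$, I would verify equivalence to the component formula via the vector triple-product identity $\dot x\times(\dot x\times x) = \dot x(\dot x\cdot x) - x|\dot x|^2$ and note that conservation then follows from conservation of $\mathbf m$ together with the identity $\tfrac{d}{dt}(\dot x\times\mathbf m) = \ddot x\times\mathbf m = \tfrac{d}{dt}(x/|x|)$, which is a short calculation using $\ddot x=-x/|x|^{3}$.

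The only mildly delicate step is the LRL computation, purely because of the number of terms, but no step is conceptually hard; all three conservation laws follow from the algebraic structure of the inverse-square force together with the symmetry of the expressions.
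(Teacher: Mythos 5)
The paper states this proposition without proof, so there is no in-text argument to compare against; your direct verification by differentiating each quantity along solutions of \eqref{eq:Kepler} is the standard argument and it is correct. The energy and angular-momentum computations are immediate, and your six-term expansion of $\dot A_i$, combined with the identities $\dot x\cdot\ddot x = -(x\cdot\dot x)/|x|^{3}$ and $x\cdot\ddot x = -1/|x|$, does make every coefficient cancel, so the componentwise form of the LRL vector is conserved exactly as you claim.

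One caution on the $n=3$ compact forms, which you treat a bit too lightly as ``a sign convention.'' With the convention $\mathbf m=\dot x\times x$ used in the statement (rather than $x\times\dot x$), the triple-product identity you quote gives $\dot x\times\mathbf m=\dot x(\dot x\cdot x)-x|\dot x|^{2}$, which is the \emph{negative} of the first two terms of the component formula for $\mathbf A$, and correspondingly $\ddot x\times\mathbf m=-\frac{d}{dt}\bigl(x/|x|\bigr)$, not $+\frac{d}{dt}\bigl(x/|x|\bigr)$ as written in your last identity. So that identity holds for $\mathbf m=x\times\dot x$; with the statement's orientation of $\mathbf m$ the conserved cross-product form is $\mathbf m\times\dot x - x/|x|$. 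This inconsistency is already present in the proposition as stated and does not affect your componentwise proof, but if you keep the $n=3$ cross-product verification you should fix the orientation of $\mathbf m$ explicitly and adjust the sign accordingly.
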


In the following, we mainly focus on the 2-dimensional Kepler problem unless explicitly stated. Using the polar coordinates $x_1 = r\cos\theta$ and $x_2 = r\sin\theta$, Kepler's three laws (for more details, refer to \cite{goldstein2011}) can be described in the following proposition.

\begin{proposition}[Kepler's Three Laws]\label{Thm:Kepler_3}
    For negative energy, the orbits of the Kepler problem \eqref{eq:Kepler} are ellipses. Let $a$ and $b$ represent the semi-major and semi-minor axes of the orbit, and $e = \sqrt{1 - b^2/a^2}$ be the eccentricity. Then, we have:
    \begin{itemize}
        \item The inverse of the radial distance is given by
        \[
        \frac{1}{r} = \left(1 + e\cos\theta\right) \frac{a}{b^2};
        \]
        \item The areal velocity is conserved, expressed as
        \[
        r^2 \frac{d\theta}{dt} = \frac{2\pi ab}{T};
        \]
        \item The period of the orbit is
        \[
        T = 2\pi a^{3/2}.
        \]
    \end{itemize}
\end{proposition}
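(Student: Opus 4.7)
The plan is to deduce the three laws from the conserved quantities already listed in Proposition \ref{Constant_Kepler}, together with elementary polar-coordinate manipulations. Using $x_1 = r\cos\theta$ and $x_2 = r\sin\theta$, the scalar angular momentum from Proposition \ref{Constant_Kepler} becomes $\mathbf{m} = r^2\dot\theta$, which is constant along solutions of \eqref{eq:Kepler}. Hence the areal velocity $\tfrac12 r^2\dot\theta = \tfrac12 \mathbf{m}$ is constant, so once the shape of the orbit has been identified, the second and third laws will follow immediately; the real content is the orbit equation.

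For the orbit equation, I would orient the polar axis along the LRL vector $\mathbf{A}$, so that $\mathbf{A}\cdot x = |\mathbf{A}|\, r\cos\theta$. A short direct computation using the explicit form of $\mathbf{A}$ in Proposition \ref{Constant_Kepler} yields
\[
\mathbf{A}\cdot x \;=\; |x|^2|\dot{x}|^2 - (x\cdot\dot{x})^2 - |x| \;=\; \mathbf{m}^2 - r,
\]
where the planar Lagrange identity $|x|^2|\dot{x}|^2 - (x\cdot\dot{x})^2 = \mathbf{m}^2$ is the key simplification. Rearranging gives
\[
\frac{1}{r} \;=\; \frac{1 + |\mathbf{A}|\cos\theta}{\mathbf{m}^2},
\]
a conic with semi-latus rectum $p = \mathbf{m}^2$ and eccentricity $e = |\mathbf{A}|$. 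Matching with the ellipse parameters $p = b^2/a$ and $e = \sqrt{1-b^2/a^2}$ pins down $\mathbf{m}^2 = b^2/a$ and confirms the first law.

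For the second law, the constant areal velocity $\tfrac12\mathbf{m}$, integrated over one period $T$, must equal the total area $\pi ab$ of the ellipse, so $\mathbf{m} = 2\pi ab/T$, which is exactly $r^2\dot\theta = 2\pi ab/T$. Eliminating $\mathbf{m}$ by combining this with $\mathbf{m}^2 = b^2/a$ gives $T^2 = 4\pi^2 a^3$, hence $T = 2\pi a^{3/2}$. The step I expect to require the most care is justifying that the negative-energy hypothesis actually produces a bounded elliptic trajectory rather than a parabolic or hyperbolic branch; this reduces to the identity $|\mathbf{A}|^2 = 1 + 2\mathbf{m}^2 H$, which I would verify by direct expansion from the formulas in Proposition \ref{Constant_Kepler} and which forces $|\mathbf{A}| < 1$ whenever $H < 0$, so that the conic is indeed an ellipse with the claimed $a$, $b$, and $e$.
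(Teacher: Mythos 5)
Your proposal is correct, but there is nothing in the paper to compare it against: the paper does not prove Proposition \ref{Thm:Kepler_3} at all, it simply records the classical statement and refers the reader to \cite{goldstein2011}. Your derivation is the standard LRL-vector route and it fits naturally into the paper's own framework, since it only uses the conserved quantities of Proposition \ref{Constant_Kepler}: the planar Lagrange identity $|x|^2|\dot{x}|^2-(x\cdot\dot{x})^2=m^2$ gives $\mathbf{A}\cdot x=m^2-r$, hence the conic $1/r=(1+|\mathbf{A}|\cos\theta)/m^2$ with $p=m^2=b^2/a$ and $e=|\mathbf{A}|$; the constant areal velocity $\tfrac12 m$ swept over one period equals $\pi ab$, giving the second law; and eliminating $m$ yields $T^2=4\pi^2a^3$. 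The identity $|\mathbf{A}|^2=1+2m^2H$ that you flag as the delicate step does check out by direct expansion, and it correctly forces $e<1$ when $H<0$. The only caveats worth stating explicitly are the nondegeneracy assumptions: you need $m\neq 0$ (otherwise $H<0$ gives a collinear collision orbit, not an ellipse), and when $\mathbf{A}=0$ the choice of polar axis is arbitrary but the formulas still hold with $e=0$. With those remarks added, your argument is a complete, self-contained proof of the proposition, which is arguably more than the paper provides.
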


As follows, we study the relationship between conservation laws of a given system and their corresponding symmetries. In order to do that, we first need to introduce the generalized vector field and variational symmetry \cite{olver1993applications}.

\begin{definition}
    A generalized vector field $\mathbf{v}$ is expressed as
    \[
    \mathbf{v} = \sum\limits_{i=1}^{n} v_{i}[x] \frac{\partial}{\partial x_{i}},
    \]
    where $[x]$ includes $t$, $x$, and all the derivatives of $x$ with respect to $t$. The vector $v = \left(v_1, \dots, v_{n}\right)$ with smooth functions $v_{i}$ is called the characteristic of $\mathbf{v}$. Its infinite prolongation is
    \[
    \operatorname{pr} \mathbf{v} = \mathbf{v} + \sum\limits_{i=1}^{n} \sum\limits_{J=1} \frac{d v_{i}}{d t} \frac{\partial}{\partial {x}_{j}^{i}},
    \]
    where $x_{j}^{i} = \frac{\partial^j x_{i}}{\partial t^j}$.
\end{definition}

\begin{definition}\label{Var_sym}
    A generalized vector field $\mathbf{v}$ is called a variational symmetry of the action functional
    \[
    S[x] = \int L[x] \, dt,
    \]
    if there exists a function $K[x]$ such that
    \[
    \operatorname{pr} \mathbf{v}(L) = \frac{d}{dt} K[x]
    \]
    for all $t$ and $x$.
\end{definition}

Consider the system of $m$-dimensional differential equations $N_i[x] = 0$, $i = 1, \dots, m$. For the conservation law $\frac{d P}{dt} = 0$ of the system, there exists a vector function $Q[x]$ such that
\[
\frac{d P}{dt} = Q[x] N[x],
\]
where $Q$ is called the characteristic of the conservation law $\frac{d P}{dt} = 0$ \cite{olver1993applications}.

\begin{theorem}[Noether's theorem]\label{Thm:Noether}
    A generalized vector field $\mathbf{v}$ is a variational symmetry of $S(x) = \int L[x] \, dt$ if and only if its characteristic $Q$ is also the characteristic of a conservation law $\frac{d P}{dt} = 0$ for the Euler--Lagrange equations.
\end{theorem}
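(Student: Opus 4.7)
My plan is to derive the theorem from the standard integration-by-parts identity relating $\operatorname{pr}\mathbf{v}(L)$ to the Euler--Lagrange expression of $L$. Working in evolutionary form $\mathbf{v} = \sum_i Q_i[x]\,\partial/\partial x_i$ with characteristic $Q = (Q_1,\dots,Q_n)$, I will establish that
\[
\operatorname{pr}\mathbf{v}(L) \;=\; Q \cdot E(L) \;+\; \frac{d\Phi}{dt}
\]
for an explicit function $\Phi[x]$ built from $L$ and $Q$, where $E(L)_i = \sum_{j\geq 0}(-1)^j (d^j/dt^j)(\partial L/\partial x_i^{(j)})$ denotes the Euler--Lagrange expression, so that $N[x] := E(L)[x] = 0$ is precisely the Euler--Lagrange system. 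The identity follows by expanding $\operatorname{pr}\mathbf{v}(L) = \sum_{i,j}(d^j Q_i/dt^j)(\partial L/\partial x_i^{(j)})$ and pushing every time derivative off $Q$ by repeated integration by parts; the cumulative boundary terms collect into the single total derivative $d\Phi/dt$, while what is left over is exactly $Q \cdot E(L)$. For our applications the Lagrangian is first order, $L = L(x,\dot{x})$, in which case $\Phi = Q \cdot \partial L/\partial \dot{x}$ and the identity is a short direct calculation.

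With this identity available, both directions of the theorem reduce to bookkeeping. For the forward implication, suppose $\mathbf{v}$ is a variational symmetry, so $\operatorname{pr}\mathbf{v}(L) = dK/dt$ by Definition \ref{Var_sym}. Substituting into the identity gives $Q \cdot N[x] = d(K - \Phi)/dt$, so that $P := K - \Phi$ satisfies $dP/dt = Q \cdot N[x]$; this is precisely the statement that $Q$ is the characteristic of the conservation law $dP/dt = 0$ for $N[x]=0$. For the converse, suppose $Q$ is the characteristic of a conservation law, i.e., $dP/dt = Q \cdot N[x]$ for some $P$. Then the identity yields $\operatorname{pr}\mathbf{v}(L) = dP/dt + d\Phi/dt = d(P+\Phi)/dt$, which is exactly the statement that $\mathbf{v}$ is a variational symmetry with $K := P + \Phi$.

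The main obstacle I anticipate is establishing the integration-by-parts identity cleanly enough that $\Phi$ is a well-defined local function of $[x]$. For a higher-order Lagrangian one must track how each $d^j Q_i/dt^j$ factor generates $j$ intermediate boundary pieces as time derivatives are shifted onto the $L$-factor, and keeping the signs and combinatorics straight is routine but notation-heavy; the cleanest bookkeeping device is a double-summation reindexing that exhibits the telescoping structure. A secondary subtlety is that a general generalized vector field of the form $\mathbf{v} = \tau\,\partial/\partial t + \sum_i \xi_i\,\partial/\partial x_i$ with a time component should first be recast in evolutionary form via $Q_i = \xi_i - \tau \dot{x}_i$; this reduction preserves both the variational symmetry property and the associated conservation law (up to modifying $P$ and $K$ by total derivatives), so we may assume from the outset that $\mathbf{v}$ is already in evolutionary form.
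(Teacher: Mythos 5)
The paper itself does not prove this theorem: it is quoted as a classical result (Noether, Olver), so there is no internal proof to compare against. Your argument is the standard proof and it is correct: the integration-by-parts identity $\operatorname{pr}\mathbf{v}(L)=Q\cdot E(L)+\frac{d\Phi}{dt}$, with $\Phi=Q\cdot\frac{\partial L}{\partial \dot{x}}$ in the first-order case relevant here, reduces both implications to observing that ``variational symmetry'' and ``characteristic of a conservation law'' each assert that one side of the identity is a total time derivative, and your bookkeeping for the forward direction ($P$ from $K$) and the converse ($K$ from $P$) is sound. Two small remarks. First, in this paper generalized vector fields are already defined in evolutionary form, $\mathbf{v}=\sum_i v_i[x]\,\partial/\partial x_i$, so the reduction $Q_i=\xi_i-\tau\dot{x}_i$ you flag as a subtlety is not needed, though it is harmless. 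Second, mind the sign convention: the paper's Euler operator is $\operatorname{EL}(L)=\frac{d}{dt}\frac{\partial L}{\partial \dot{x}}-\frac{\partial L}{\partial x}$, which is $-E(L)$ in your notation, and its Kepler example takes $N[x]=\operatorname{EL}(L)$ (so that $\frac{dH}{dt}=\dot{x}\cdot N[x]$ with $Q=\dot{x}$); with that convention the conserved quantity in your forward direction comes out as $P=\Phi-K$ rather than $K-\Phi$. This changes nothing essential, since it only flips the sign of $P$, but matching the paper's convention reproduces the expected $P=\dot{x}\cdot\frac{\partial L}{\partial \dot{x}}-L=H$ in the energy example and avoids confusion when the theorem is invoked later (e.g.\ in Proposition \ref{PerGVF}).
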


Based on Noether's theorem, for the two-dimensional Kepler problem \eqref{eq:Kepler}, we derive the variational symmetries corresponding to the three conservative quantities expressed in Proposition \ref{Constant_Kepler}.

\begin{proposition}\label{GVF_Kepler}
    The variational symmetries associated with the energy, angular momentum, and the LRL vector are as follows:
    \begin{itemize}
        \item $\mathbf{v}_{H} = \dot{x}_1 \frac{\partial}{\partial x_1} + \dot{x}_2 \frac{\partial}{\partial x_2},$
        \item $\mathbf{v}_m = - x_2 \frac{\partial}{\partial x_1} + x_1 \frac{\partial}{\partial x_2},$
        \item $\mathbf{v}_{A_1} = - x_2 \dot{x}_2 \frac{\partial}{\partial x_1} + \left(2 x_1 \dot{x}_2 - \dot{x}_1 x_2\right) \frac{\partial}{\partial x_2}$, $\mathbf{v}_{A_2} = \left(2 x_2 \dot{x}_1 - x_1 \dot{x}_2\right) \frac{\partial}{\partial x_1} - x_1 \dot{x}_1 \frac{\partial}{\partial x_2}.$
    \end{itemize}
\end{proposition}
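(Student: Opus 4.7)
The plan is to invoke Noether's theorem (Theorem \ref{Thm:Noether}) in its ``conservation law $\Longleftrightarrow$ variational symmetry'' direction. The Euler--Lagrange operator for the Lagrangian \eqref{eq:Lagrangian_Kepler} is
\[
N_i[x] = \ddot{x}_i + \frac{x_i}{|x|^3}, \qquad i=1,2,
\]
so for any conserved quantity $P$ of Proposition \ref{Constant_Kepler} I need to write $dP/dt = Q_1[x]\,N_1[x]+Q_2[x]\,N_2[x]$ on-shell-independently; the pair $Q=(Q_1,Q_2)$ is then the characteristic of the conservation law, and Noether's theorem directly yields the generalized vector field $\mathbf{v}=Q_1\,\partial_{x_1}+Q_2\,\partial_{x_2}$ as a variational symmetry. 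Hence the whole proof reduces to identifying, for each of $H$, $m$, $A_1$, $A_2$, the characteristic $Q$ and checking that it matches the coefficients listed in the proposition.

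First I would treat the two easy cases. For the energy, a direct computation gives $dH/dt = \dot{x}_1 N_1 + \dot{x}_2 N_2$ (the $1/|x|$--terms cancel automatically), so $Q^{H}=(\dot{x}_1,\dot{x}_2)$, reproducing $\mathbf{v}_H$. For the angular momentum $m=x_1\dot x_2-x_2\dot x_1$, differentiation and cancellation of the $\dot x$--terms give $dm/dt = -x_2 N_1 + x_1 N_2$, so $Q^{m}=(-x_2,x_1)$ and $\mathbf{v}_m$ follows.

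Next I would handle the LRL components. Write $A_1=x_1\dot x_2^{\,2}-x_2\dot x_1\dot x_2-x_1/|x|$ (after the algebraic simplification $x_1|\dot x|^2-\dot x_1(x\cdot\dot x)=x_1\dot x_2^2-\dot x_1 x_2\dot x_2$), differentiate in $t$, and reorganize second-derivative terms by substituting $\ddot x_i = N_i - x_i/|x|^3$ mentally, tracking coefficients of $N_1$ and $N_2$. I expect to arrive at
\[
\frac{dA_1}{dt} = -x_2\dot{x}_2\,N_1 + (2x_1\dot{x}_2-\dot{x}_1 x_2)\,N_2,
\]
and the symmetric manipulation for $A_2$ will give $dA_2/dt = (2x_2\dot{x}_1-x_1\dot{x}_2)N_1 - x_1\dot{x}_1\,N_2$. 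Reading off the characteristics gives precisely the vector fields $\mathbf{v}_{A_1}$ and $\mathbf{v}_{A_2}$ in the statement, so Noether's theorem concludes the proof.

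The main obstacle is the LRL calculation: the terms involving $x/|x|^3$ arising from $d(-x_i/|x|)/dt$ must combine cleanly with the ``$-x_i/|x|^3$'' pieces produced when forcing $\ddot{x}_i$ into the form $N_i-x_i/|x|^3$, so that no residual ``off-shell'' terms remain outside $Q\cdot N$. I would organize the bookkeeping by splitting each $dA_i/dt$ into a purely kinetic part (which automatically groups as $\sum Q_k\ddot x_k$) and a potential part (which must match $\sum Q_k x_k/|x|^3$); the coincidence of these two pieces is precisely the Kepler-specific identity that makes the LRL vector conserved, and it is exactly what makes the Noether identification work.
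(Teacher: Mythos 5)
Your proposal is correct and follows essentially the same route as the paper: compute $dP/dt = Q[x]\,N[x]$ with $N_i[x]=\ddot{x}_i + x_i/|x|^3$ to read off the characteristic $Q$ of each conservation law, then invoke Noether's theorem (Theorem \ref{Thm:Noether}) to identify $\mathbf{v}=Q_1\,\partial_{x_1}+Q_2\,\partial_{x_2}$ as the variational symmetry. The paper carries this out explicitly only for the energy and remarks that the other cases are analogous, whereas you sketch the angular-momentum and LRL computations (correctly), so no gap remains.
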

	
\begin{proof}
    Taking the derivative of the energy $H$ with respect to $t$ gives
    \begin{align}\label{eq:hamsym}
        \frac{dH}{dt} = \dot{x}_1 \left( \frac{x_1}{|x|^3} + \ddot{x}_1 \right) + \dot{x}_2 \left( \frac{x_2}{|x|^3} + \ddot{x}_2 \right).
    \end{align}
    Denote $N[x] := \left( \frac{x_1}{|x|^3} + \ddot{x}_1, \frac{x_2}{|x|^3} + \ddot{x}_2 \right)^\top.$
    It follows from \eqref{eq:hamsym} that
    \begin{align}
        \frac{dH}{dt} = (\dot{x}_1, \dot{x}_2) N[x].
    \end{align}
    It is clear that $Q = (\dot{x}_1, \dot{x}_2)$ is the characteristic of $H$. The other variational symmetries can be obtained in a similar way.
\end{proof}
	
Denote the Euler operator as 
\begin{align}\label{eq:EL}
    \operatorname{EL}(L) = \frac{d}{dt} \frac{\partial L}{\partial \dot{x}} - \frac{\partial L}{\partial x}.
\end{align}
As follows, we analyze the effects of system perturbations on conserved quantities.
	
\begin{proposition}\label{PerGVF}
Let $L$ be a given Lagrangian and $\mathbf{v}$ be its variational symmetry.
Denote $P$ as the conserved quantity associated with the Euler--Lagrange equation $\operatorname{EL}(L) = 0$. Consider the perturbed Lagrangian $\widetilde{L} = L + \varepsilon \overline{L}$, then the time evolution of $P$ along the solution trajectory of the perturbed system $\operatorname{EL}(\widetilde{L}) = 0$ is given by
\[
    \frac{dP}{dt} = - \varepsilon \left\langle \operatorname{EL}(\overline{L}), \mathbf{v} \right\rangle,
\]
where $\langle \cdot, \cdot \rangle$ denotes the inner product.
\end{proposition}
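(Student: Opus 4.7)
The plan is to reduce the claim to Noether's theorem (Theorem \ref{Thm:Noether}) combined with the elementary linearity of the Euler operator \eqref{eq:EL} in the Lagrangian. I would proceed in three short steps.

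First, by the linearity of $\operatorname{EL}$ with respect to $L$, one has
\[
\operatorname{EL}(\widetilde{L}) = \operatorname{EL}(L) + \varepsilon \operatorname{EL}(\overline{L}),
\]
so along any trajectory solving $\operatorname{EL}(\widetilde{L}) = 0$ the identity $\operatorname{EL}(L) = -\varepsilon \operatorname{EL}(\overline{L})$ holds. Note that I am not assuming $\operatorname{EL}(L) = 0$; the unperturbed Euler--Lagrange expression is merely displaced by $-\varepsilon\operatorname{EL}(\overline{L})$ along solutions of the perturbed system.

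Second, I would rephrase Noether's theorem for the unperturbed $L$ as an off-shell identity. Since $\mathbf{v}$ is a variational symmetry of $L$, Theorem \ref{Thm:Noether} together with the definition of the characteristic of a conservation law (the relation $\tfrac{dP}{dt} = Q[x]\,N[x]$ stated just above Theorem \ref{Thm:Noether}) produces the conserved quantity $P$ and a characteristic $Q$ satisfying
\[
\frac{dP}{dt} = \langle \operatorname{EL}(L), \mathbf{v} \rangle,
\]
where $Q$ is identified with the components of $\mathbf{v}$. The crucial observation, which I would flag explicitly in the write-up, is that this equality is an identity in the jet variables $[x]$, not only an on-shell relation; this is exactly what the definition of the characteristic of a conservation law provides.

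Substituting the expression $\operatorname{EL}(L) = -\varepsilon \operatorname{EL}(\overline{L})$ from the first step into this Noether identity yields, along any solution of $\operatorname{EL}(\widetilde{L}) = 0$,
\[
\frac{dP}{dt} = -\varepsilon \langle \operatorname{EL}(\overline{L}), \mathbf{v} \rangle,
\]
which is the claimed formula. The only real subtlety is ensuring that the Noether identity is invoked in its off-shell form and that the sign convention of the pairing $Q \leftrightarrow \mathbf{v}$ is consistent with Theorem \ref{Thm:Noether}; beyond this bookkeeping the result is immediate, and no analytic estimates are required.
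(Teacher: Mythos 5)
Your proposal is correct and follows essentially the same route as the paper's proof: invoke Noether's theorem to write $\frac{dP}{dt} = \langle \operatorname{EL}(L), \mathbf{v} \rangle$ as an identity, then use the linearity of the Euler operator so that along solutions of $\operatorname{EL}(\widetilde{L})=0$ one has $\operatorname{EL}(L) = -\varepsilon\,\operatorname{EL}(\overline{L})$, and substitute. Your explicit remark that the Noether relation must be used off-shell is a sensible clarification of exactly the step the paper takes implicitly.
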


\begin{proof}
By Noether's theorem (Theorem \ref{Thm:Noether}), it is known that the vector field $\mathbf{v}$ yields
    \begin{align}\label{eq:Noe}
        \frac{dP}{dt} = \left\langle \operatorname{EL}(L), \mathbf{v} \right\rangle.
    \end{align}
According to Definition \eqref{eq:EL}, for the solution of the perturbed system $\operatorname{EL}(\widetilde{L}) = 0$, we have
\[
    \left\langle \operatorname{EL}(L), \mathbf{v} \right\rangle + \varepsilon \left\langle \operatorname{EL}(\overline{L}), \mathbf{v} \right\rangle = 0.
\]
Thus, from \eqref{eq:Noe}, it follows that
\[
    \frac{dP}{dt} = - \varepsilon \left\langle \operatorname{EL}(\overline{L}), \mathbf{v} \right\rangle.
\]
\end{proof}
	
\section{Discrete Lagrangian mechanics}
Suppose that the time is discretized uniformly by a constant time-step $h$. Denote $x_{n}$ as the generalized coordinates at $t_{n} = t_{0} + nh$. The discrete Lagrangian $\mathbb{L}$ introduced in \cite{marsden2001discrete} can be understood as an approximation of the integral of the continuous Lagrangian over $\left[t_{n},t_{n+1}\right]$
\[
    h\mathbb{L}\left(x_{n},x_{n+1},h\right) \approx \int_{t_{n}}^{t_{n+1}} L(x,\dot{x})\,dt.
\]
The discrete action functional $\mathbb{S}$ is defined as a sum of the discrete Lagrangians indexed by time
\begin{align}\label{eq:disS}
    \mathbb{S}\left(\left\{x_{n}\right\}_{n=0}^{\mathrm{N}}\right) = h\sum_{n=0}^{\mathrm{N}-1} \mathbb{L}\left(x_{n},x_{n+1},h\right).
\end{align}
The discrete Hamilton's principle is to find the trajectories $\left\{ x_{n}\right\}_{n=0}^{\mathrm{N}}$ extremizing the discrete action functional. This gives the discrete Euler–Lagrange equations:
\[
    \partial_1 \mathbb{L}(x_{n},x_{n+1},h) + \partial_2\mathbb{L}(x_{n-1},x_{n},h) = 0.
\]

\begin{definition}\label{def:disLeg}
    Define the discrete Legendre transforms $\mathcal{F}_{\mathbb{L},h}^{-}$ and $\mathcal{F}_{\mathbb{L},h}^{+}$ as
    \begin{align*}
        \mathcal{F}_{\mathbb{L},h}^{-}:(x_{n},x_{n+1}) &\rightarrow (x_{n}, p_{n}) = \left(x_{n}, -h \partial_{1}\mathbb{L}(x_{n},x_{n+1},h)\right), \\
        \mathcal{F}_{\mathbb{L},h}^{+}:(x_{n},x_{n+1}) &\rightarrow (x_{n+1}, p_{n+1}) = \left(x_{n+1}, h \partial_{2}\mathbb{L}(x_{n},x_{n+1},h)\right).
    \end{align*}
    The discrete Lagrangian $\mathbb{L}(x_{n}, x_{n+1}, h)$ is called regular if ${\rm det}\left(\frac{\partial^2 \mathbb{L}(x_{n}, x_{n+1}, h)}{\partial x_{n} \partial x_{n+1}}\right)\neq 0$.
\end{definition}

Consider the second order differential system $\ddot{x} = -\nabla \phi(x)$. With the Lagrangian $L(x, \dot{x})=\frac{1}{2} \dot{x}^{\top} {\dot x}-\phi(x)$, the first order discrete Lagrangian can be taken as
\begin{align}\label{eq:disLag_symE}
    \mathbb{L}_{1}(x_{n}, x_{n+1}, h) = \frac{1}{2}\frac{\left( x_{n+1} - x_{n}\right)^{\top}\left( x_{n+1} - x_{n}\right)}{h^2} - \phi(x_{n}),
\end{align}
and the second order discrete Lagrangian can be chosen as
\begin{align}\label{eq:disLag_SV}
    \mathbb{L}_{2}(x_{n}, x_{n+1}, h) = \frac{1}{2}\frac{\left( x_{n+1} - x_{n}\right)^{\top}\left( x_{n+1} - x_{n}\right)}{h^2} - \frac{1}{2}\left[\phi(x_{n}) + \phi(x_{n+1})\right].
\end{align}
Both the discrete Lagrangians provide the same second order variational integrator via the discrete Euler--Lagrange equation
\[
    \frac{x_{n+1} - 2x_{n} + x_{n-1}}{h^2} = -\nabla \phi(x_{n}).
\]
This implies that the two discrete Lagrangians are \textbf{weakly equivalent} as described in \cite{marsden2001discrete}.

Introducing the discrete Legendre transform as defined in Definition \ref{def:disLeg}, the discrete Hamiltonian maps corresponding to \eqref{eq:disLag_symE} and \eqref{eq:disLag_SV} are the symplectic Euler and Störmer--Verlet methods, respectively.

\begin{definition}[The adjoint of the Lagrangian]\label{def:adj_Langrangian}
    Given $\mathbb{L}$, the adjoint of $\mathbb{L}$, denoted by $\mathbb{L}^*$, is defined as
    \[
        \mathbb{L}^*(x_{n},x_{n+1},h) = \mathbb{L}(x_{n+1},x_{n},-h).
    \]
\end{definition}
	
Consider the system
\begin{equation}\label{eq:Eq1}
    \ddot{x} = -\nabla\phi (x), \quad x \in \mathbf{R}^{\mathrm{N}}.
\end{equation}
It can be derived from the variational principle with Lagrangian $L(x, \dot{x})=\frac{1}{2} \dot{x}^{\top} \dot{x} - \phi(x)$. It also has the Hamiltonian form with $H(x, p) = \frac{1}{2} p^\top p + \phi(x)$, where $p = \dot{x}$.

Split $\phi(x)$ as $\phi(x) = \sum\limits_{i=1}^{\mathrm{N}} \phi^{[i]}(x)$, then the Hamiltonian can be decomposed as
\[
H(x, p) = \sum\limits_{i=1}^{\mathrm{N}} \frac{1}{2} p_{i}^2 + \phi^{[i]}(x) := \sum\limits_{i=1}^{\mathrm{N}} H^{[i]}(x, p).
\]
We can construct a symplectic numerical method for system \eqref{eq:Eq1} by composition, that is
\begin{align}\label{eq:Hcomposition}
    \Phi_{h} = \Phi_{H_{N}}^h \circ \dots \circ \Phi_{H_1}^h,
\end{align}
where $\Phi_{H_{i}}^h$ reads
\[
\begin{array}{l}
    x_{n+1} = x_{n} + h\frac{\partial H^{[i]}}{\partial p}(x_{n+1},p_{n}) = x_{n} + hp_{n}^{i}{\bf e}_{i}, \\
    p_{n+1} = p_{n} - h\frac{\partial H^{[i]}}{\partial x}(x_{n+1},p_{n}) = p_{n} - h\nabla \phi^{[i]}(x_{n+1}).
\end{array}
\]
Here, ${\bf e}_{j}$ is the $N$-dimensional vector with the $j$-th element being 1.

Define a discrete Lagrangian as
\begin{align}\label{eq:splitting}
    \mathbb{L}^{\text{1st}}(x_{n}, x_{n+1}, h)
    &= \frac{1}{2}\frac{\left(x_{n+1}-x_{n}\right)^\top\left(x_{n+1}-x_{n}\right)}{h^2}
    - \Big[\phi^{[1]}(x_{n+1}^1, x_{n}^2, \dots, x_{n}^{\mathrm{N}}) + \dots \notag \\
    &+ \phi^{[i]}(x_{n+1}^1, \dots, x_{n+1}^{i}, x_{n}^{i+1}, \dots, x_{n}^{\mathrm{N}}) + \dots + \phi^{[N]}(x_{n+1}^1, \dots, x_{n+1}^{\mathrm{N}})\Big].
\end{align}
Denote $\widehat{x}_{n}^{i} = x_{n} + \sum\limits_{j=1}^{i}(x_{n+1}^j - x_{n}^j) {\bf e}_{j}$, $i=1, \dots, \mathrm{N}$. It follows from \eqref{eq:splitting} that
\begin{align}\label{eq:splitting1}
    \mathbb{L}^{\text{1st}}(x_{n}, x_{n+1}, h)
    = \frac{1}{2}\frac{\left(x_{n+1}-x_{n}\right)^\top\left(x_{n+1}-x_{n}\right)}{h^2}
    - \sum_{i=1}^{\mathrm{N}} \phi^{[i]}(\widehat{x}_{n}^{i}).
\end{align}		
With the discrete Lagrangian \eqref{eq:splitting1}, we state the following theorem.

\begin{theorem}\label{Thm:Splitting}
The discrete Lagrangian is of order 1. Moreover, the discrete Euler--Lagrange equations induced by \eqref{eq:splitting1} are given by
\[
\begin{aligned}
    &\frac{x_{n+1}^1 - 2 x_{n}^1 + x_{n-1}^1}{h^2} = - \frac{\partial}{\partial x_1} \left( \sum_{j=1}^{\mathrm{N}} \phi^{[j]}(\widehat{x}_{n-1}^j) \right), \\
    &\frac{x_{n+1}^{i} - 2 x_{n}^{i} + x_{n-1}^{i}}{h^2} = - \frac{\partial}{\partial x_{i}} \left( \sum_{j=1}^{i-1} \phi^{[j]}(\widehat{x}_{n}^j) + \sum_{j=i}^{\mathrm{N}} \phi^{[j]}(\widehat{x}_{n-1}^j) \right), \quad i = 2, \dots, \mathrm{N},
\end{aligned}
\]
and it is equivalent to the symplectic method $\Phi_{h}$ \eqref{eq:Hcomposition}.
\end{theorem}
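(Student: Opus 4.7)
The plan is to prove the three claims in sequence: first-order consistency of $\mathbb{L}^{\text{1st}}$, the explicit form of the discrete Euler--Lagrange equations, and the equivalence with the composition $\Phi_h$.

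For the order I would evaluate $\mathbb{L}^{\text{1st}}(x(t_n), x(t_{n+1}), h)$ along a smooth exact trajectory of \eqref{eq:Eq1} and compare with $\frac{1}{h}\int_{t_n}^{t_{n+1}} L(x,\dot{x})\,dt$. Taylor expanding $x(t_{n+1})$ about $t_n$, the kinetic piece $\frac{|x_{n+1}-x_n|^2}{2h^2}$ reproduces $\frac{1}{2}|\dot{x}(t_n)|^2$ up to $O(h)$. For each potential piece, since every component of $\widehat{x}_n^i$ differs from the corresponding component of $x_n$ by at most $O(h)$, a Taylor expansion gives $\phi^{[i]}(\widehat{x}_n^i)=\phi^{[i]}(x_n) + O(h)$, and summing over $i$ recovers $\phi(x_n)+O(h)$. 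Hence $\mathbb{L}^{\text{1st}} - \frac{1}{h}\int_{t_n}^{t_{n+1}} L\,dt = O(h)$, which is the standard criterion for a first-order discrete Lagrangian.

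Next I would compute the partial derivatives via the chain rule. The crucial observation is that the $i$-th component of $\widehat{x}_n^j$ equals $x_{n+1}^i$ when $i\le j$ and $x_n^i$ when $i > j$. Hence for fixed $i$, differentiating $\sum_{j=1}^{\mathrm{N}} \phi^{[j]}(\widehat{x}_n^j)$ with respect to $x_n^i$ retains only the terms with $j<i$, while differentiating with respect to $x_{n+1}^i$ retains only the terms with $j\ge i$. Combining with the kinetic derivatives $\mp(x_{n+1}-x_n)/h^2$ and then summing $\partial_1\mathbb{L}^{\text{1st}}(x_n,x_{n+1},h) + \partial_2\mathbb{L}^{\text{1st}}(x_{n-1},x_n,h) = 0$ yields exactly the two families of equations stated in the theorem; $i=1$ is the boundary case where the sum over $j<1$ is empty, so only the terms evaluated at $\widehat{x}_{n-1}^j$ survive.

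For the equivalence with $\Phi_h$, I would apply the discrete Legendre transforms of Definition \ref{def:disLeg}. Setting $p_n = -h\,\partial_1\mathbb{L}^{\text{1st}}$ and $p_{n+1} = h\,\partial_2\mathbb{L}^{\text{1st}}$ and using the derivative formulas above give the update laws
\[
    \frac{x_{n+1}^i - x_n^i}{h} = p_n^i - h\sum_{j<i}\frac{\partial\phi^{[j]}}{\partial x_i}(\widehat{x}_n^j), \qquad p_{n+1} = p_n - h\sum_{j=1}^{\mathrm{N}}\nabla\phi^{[j]}(\widehat{x}_n^j).
\]
On the other side, I would unroll $\Phi_{H_{\mathrm{N}}}^h\circ\cdots\circ\Phi_{H_1}^h$ with intermediate states $(y^k,q^k)$ starting from $(x_n,p_n)$, and prove by induction on $k$ that $(y^k)_j = x_{n+1}^j$ for $j\le k$ and $(y^k)_j = x_n^j$ for $j>k$, so that $y^k \equiv \widehat{x}_n^k$, while $q^k = p_n - h\sum_{j=1}^k\nabla\phi^{[j]}(\widehat{x}_n^j)$. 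Evaluating at $k=\mathrm{N}$ then reproduces precisely the two update laws above, so the two maps $(x_n,p_n)\mapsto(x_{n+1},p_{n+1})$ coincide.

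The main obstacle will be the bookkeeping in the second and third steps: correctly identifying which components of the staircase-interpolated point $\widehat{x}_n^j$ depend on $x_n$ versus $x_{n+1}$, handling the boundary index $i=1$, and aligning the intermediate composition states $y^k$ with $\widehat{x}_n^k$. Once this indexing is organized, the identifications are purely algebraic, but the correctness of the equivalence hinges entirely on getting those index ranges right.
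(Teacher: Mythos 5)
Your proposal is correct and follows essentially the same route as the paper: Taylor expansion of the kinetic and potential pieces for first-order accuracy, the componentwise chain rule exploiting that $\widehat{x}_n^j$ carries $x_{n+1}^i$ for $i\le j$ and $x_n^i$ for $i>j$, and the discrete Legendre transforms $p_n=-h\,\partial_1\mathbb{L}^{\text{1st}}$, $p_{n+1}=h\,\partial_2\mathbb{L}^{\text{1st}}$ to identify the one-step map with $\Phi_h$. The only difference is that you spell out the induction matching the intermediate composition states $(y^k,q^k)$ with $\widehat{x}_n^k$, a detail the paper asserts without elaboration, and your index bookkeeping (including the empty sum at $i=1$) is consistent with the stated equations.
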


\begin{proof}
From the Taylor expansion, we have 
\[
\frac{x(t_{n+1}) - x(t_{n})}{h} = \dot{x}(t) + \mathcal{O}(h)
\]
and 
\[
\phi^{[i]}(\widehat{x}_{n}^{i}) = \phi^{[i]}(x(t)) + \mathcal{O}(h)
\]
on the interval $[t_{n}, t_{n+1}]$. Substituting these into \eqref{eq:splitting1}, we find that the discrete Lagrangian has first order accuracy.

Introduce the discrete Legendre transform
\begin{align}
    &p_{n} = -h\frac{\partial \mathbb{L}^{\text{1st}}(x_{n}, x_{n+1})}{\partial x_{n}} = \frac{x_{n+1} - x_{n}}{h} + h \sum\limits_{i=2}^{\mathrm{N}} \frac{\partial}{\partial x_{i}} \sum\limits_{j=1}^{i-1} \phi^{[j]}(\widehat{x}_{n}^j) {\bf e}_{i}, \label{eq:Thm_splitting_eq1} \\
    &p_{n+1} = h\frac{\partial \mathbb{L}^{\text{1st}}(x_{n}, x_{n+1})}{\partial x_{n+1}} = \frac{x_{n+1} - x_{n}}{h} + h \sum\limits_{i=1}^{\mathrm{N}} \frac{\partial}{\partial x_{i}} \sum\limits_{j=i}^{\mathrm{N}} \phi^{[j]}(\widehat{x}_{n}^j) {\bf e}_{i}. \label{eq:Thm_splitting_eq2}
\end{align}
From \eqref{eq:Thm_splitting_eq1} and \eqref{eq:Thm_splitting_eq2}, we can derive
\begin{align}
    &x_{n+1} = x_{n} + h p_{n} - h^2 \sum\limits_{i=2}^{\mathrm{N}} \frac{\partial}{\partial x_{i}} \sum\limits_{j=1}^{i-1} \phi^{[j]}(\widehat{x}_{n}^j) {\bf e}_{i}, \label{eq:Phi1}\\
    &p_{n+1} = p_{n} - h \sum\limits_{i=1}^{\mathrm{N}} \nabla \phi^{[i]}(\widehat{x}_{n}^{i}). \label{eq:Phi2}
\end{align}
This is exactly the composition method $\Phi_{h}$ \eqref{eq:Hcomposition}.
\end{proof}

We construct the discrete Lagrangian of high order by a given discrete Lagrangian and its adjoint. For instance, for system \eqref{eq:Eq1}, the discrete Lagrangian $\mathbb{L}^{\text{1st}}$ \eqref{eq:splitting1} provides the discrete Lagrangians of second order:
\begin{equation}\label{eq:VI2_1}
    \mathbb{L}^{\text{2nd}}\left(x_{n}, x_{n+1}, h\right)
    = \frac{1}{2} \mathbb{L}^{\text{1st}}\left(x_{n}, x_{n+1/2}, h/2\right)
    + \frac{1}{2} \mathbb{L}^{*}\left(x_{n+1/2}, x_{n+1}, h/2\right),
\end{equation}
or
\begin{equation}\label{eq:VI2_2}
    \mathbb{L}^{\text{2nd}}\left(x_{n}, x_{n+1}, h\right)
    = \frac{1}{2} \mathbb{L}^{*}\left(x_{n}, x_{n+1/2}, h/2\right)
    + \frac{1}{2} \mathbb{L}^{\text{1st}}\left(x_{n+1/2}, x_{n+1}, h/2\right),
\end{equation}
where $\mathbb{L}^*$ is the adjoint of $\mathbb{L}^{\text{1st}}$. We also have the following theorem.

\begin{theorem}\label{Thm:Hcompsition_2}
    The variational integrator derived from the adjoint discrete Lagrangian $\mathbb{L}^*$ is equivalent to the adjoint of $\Phi_h$ defined in \eqref{eq:Hcomposition}, denoted by $\Phi_h^*$. For the second order discrete Lagrangians, the integrator from \eqref{eq:VI2_1} is equivalent to
    \[
    \Phi_h^{\text{2nd}} = \Phi_{h/2}^* \circ \Phi_{h/2},
    \]
    while the integrator from \eqref{eq:VI2_2} is equivalent to
    \[
    {\Phi_h^{\text{2nd}}}^* = \Phi_{h/2} \circ \Phi_{h/2}^*.
    \]
\end{theorem}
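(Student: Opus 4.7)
\medskip

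The plan is to prove the three assertions in sequence, each by expanding the relevant discrete Legendre transforms and matching them with the maps from \eqref{eq:Hcomposition}. For the first assertion, I would start from Definition \ref{def:adj_Langrangian} and write $\mathbb{L}^{*}(x_n,x_{n+1},h)=\mathbb{L}^{\text{1st}}(x_{n+1},x_n,-h)$. Applying the discrete Legendre transforms of Definition \ref{def:disLeg} to $\mathbb{L}^{*}$ and substituting the explicit expressions \eqref{eq:Thm_splitting_eq1}--\eqref{eq:Thm_splitting_eq2} (with the roles of $x_n$ and $x_{n+1}$ swapped and $h$ replaced by $-h$), I would obtain a system of the same form as \eqref{eq:Phi1}--\eqref{eq:Phi2} but with the composition order reversed; this is exactly the definition of $\Phi_h^{*}$ as the inverse flow $\Phi_{-h}^{-1}$ of the composition $\Phi_h$ in \eqref{eq:Hcomposition}.

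For the second assertion, I would treat $x_{n+1/2}$ as an internal variable in \eqref{eq:VI2_1} and first derive its defining relation. Differentiating the right-hand side of \eqref{eq:VI2_1} with respect to $x_{n+1/2}$ and setting the result to zero gives the matching condition
\begin{equation*}
    \tfrac{h}{2}\,\partial_{2}\mathbb{L}^{\text{1st}}\bigl(x_n,x_{n+1/2},h/2\bigr) + \tfrac{h}{2}\,\partial_{1}\mathbb{L}^{*}\bigl(x_{n+1/2},x_{n+1},h/2\bigr)=0,
\end{equation*}
which by Definition \ref{def:disLeg} is precisely the statement that the momentum $p_{n+1/2}$ produced by the first half-step coincides with the momentum fed into the second half-step. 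Then I would compute $p_n=-h\,\partial_1 \mathbb{L}^{\text{2nd}}$ and $p_{n+1}=h\,\partial_2 \mathbb{L}^{\text{2nd}}$, noting that $\partial_1 \mathbb{L}^{\text{2nd}} = \tfrac{1}{2}\partial_1\mathbb{L}^{\text{1st}}(x_n,x_{n+1/2},h/2)$ and $\partial_2 \mathbb{L}^{\text{2nd}} = \tfrac{1}{2}\partial_2\mathbb{L}^{*}(x_{n+1/2},x_{n+1},h/2)$, so that $p_n$ is the input momentum of $\Phi_{h/2}$ and $p_{n+1}$ is the output momentum of $\Phi_{h/2}^{*}$. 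Combined with the intermediate matching, this chain of identifications yields $(x_{n+1},p_{n+1})=\Phi_{h/2}^{*}\circ\Phi_{h/2}(x_n,p_n)$.

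The third assertion follows by exactly the same argument with the roles of $\mathbb{L}^{\text{1st}}$ and $\mathbb{L}^{*}$ interchanged, giving ${\Phi_h^{\text{2nd}}}^{*}=\Phi_{h/2}\circ\Phi_{h/2}^{*}$. The main technical obstacle is verifying the intermediate momentum-matching identity cleanly: one must track the factors $1/2$ in front of each sub-Lagrangian together with the half-step $h/2$ and ensure that the resulting equation really expresses equality of the forward momentum from $\Phi_{h/2}$ at time $t_{n+1/2}$ with the backward momentum from $\Phi_{h/2}^{*}$ at that same time. Once this bookkeeping is done, the rest reduces to routine application of Theorem \ref{Thm:Splitting} and the first assertion of the present theorem.
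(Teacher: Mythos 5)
Your plan follows essentially the same route as the paper's proof: compute the discrete Legendre transforms of $\mathbb{L}^{*}$ and of $\mathbb{L}^{\text{2nd}}$, use the stationarity in the internal point $x_{n+1/2}$ (equivalently, the discrete Euler--Lagrange equation there) to match the half-step momenta, and then identify the full-step map as the composition $\Phi_{h/2}^{*}\circ\Phi_{h/2}$, with the reversed case by symmetry. Your bookkeeping of the $\tfrac12$ factors and the half-step $h/2$ is consistent with the paper's conventions, so the argument is correct and essentially identical to the published one.
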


\begin{proof}
By Definition \ref{def:disLeg}, the momentum variables are given by:
\begin{align*}
    p_n
    &= - h \frac{\partial \mathbb{L}^*(x_n, x_{n+1}, h)}{\partial x_n}
    = - h \frac{\partial \mathbb{L}^{\text{1st}}(x_{n+1}, x_n, -h)}{\partial x_n},\\
    p_{n+1}
    &= h \frac{\partial \mathbb{L}^*(x_n, x_{n+1}, h)}{\partial x_{n+1}}
    = h \frac{\partial \mathbb{L}^{\text{1st}}(x_{n+1}, x_n, -h)}{\partial x_{n+1}}.
\end{align*}
These expressions coincide with the discrete Legendre transforms for $\mathbb{L}(x_{n+1}, x_n, -h)$, proving that the variational integrator derived from $\mathbb{L}^*$ is equivalent to the adjoint map $\Phi_h^*$.

We prove the equivalence of the variational integrator in \eqref{eq:VI2_1} and $\Phi_h^{\text{2nd}}$. The map $\Phi_h^{\text{2nd}}:(x_n, p_n) \rightarrow (x_{n+1}, p_{n+1})$ satisfies:
\begin{align}
    p_n
    &= - h \frac{\partial \mathbb{L}(x_n, x_{n+1}, h)}{\partial x_n}
    = - \frac{h}{2} \frac{\partial \mathbb{L}^{\text{1st}}(x_n, x_{n+1/2}, h)}{\partial x_n}, \label{eq:Hcom2_eq1}\\
    p_{n+1}
    &= h \frac{\partial \mathbb{L}(x_n, x_{n+1}, h)}{\partial x_{n+1}}
    = \frac{h}{2} \frac{\partial \mathbb{L}^*(x_n, x_{n+1/2}, h)}{\partial x_{n+1}}. \label{eq:Hcom2_eq2}
\end{align}
Introducing
\[
    p_{n+1/2} = \frac{h}{2} \frac{\partial \mathbb{L}^{\text{1st}}}{\partial x_{n+1/2}}(x_n, x_{n+1/2}),
\]
and using the discrete Euler--Lagrange equations, we find:
\begin{equation}\label{eq:Hcom2_eq3}
    p_{n+1/2}
    = \frac{h}{2} \frac{\partial \mathbb{L}^{\text{1st}}}{\partial x_{n+1/2}}(x_n, x_{n+1/2})
    = - \frac{h}{2} \frac{\partial \mathbb{L}^*}{\partial x_{n+1/2}}(x_{n+1/2}, x_{n+1}).
\end{equation}
Combining \eqref{eq:Hcom2_eq1}--\eqref{eq:Hcom2_eq3} proves that $\Phi_h^{\text{2nd}} = \Phi_{h/2}^* \circ \Phi_{h/2}$.
\end{proof}
	
\noindent {\bf The relativistic Kepler problem.} To simulate the motion of a slowly accelerated charged particle in an attractive Coulomb field (as in electrodynamics), we consider the Kepler system with relativistic corrections, described by
\begin{equation}\label{eq:relaKepler}
    \frac{d}{dt}\left(\gamma(\dot{x})\dot{x}\right) = -\frac{x}{|x|^3},
\end{equation}
where $\gamma(\dot{x}) = \frac{1}{\sqrt{1-\frac{|\dot{x}|^2}{c^2}}}$ is the Lorentz factor, and $c$ is the speed of light. It is known that there exists a potential function $\phi(x) = - \frac{1}{|x|}$ such that $- \nabla \phi(x) = \frac{x}{|x|^3}$. In the non-relativistic limit, where $|\dot{x}| \ll c$ (i.e., $\frac{|\dot{x}|}{c} \rightarrow 0$), the system \eqref{eq:relaKepler} reduces to the classical Kepler problem \eqref{eq:Kepler}.
Since
\[
    \frac{\partial \gamma(\dot{x})}{\partial \dot{x}_{i}}\dot{x}_{j} = \frac{\partial \gamma(\dot{x})}{\partial \dot{x}_{j}}\dot{x}_{i} = \frac{\dot{x}_{i}\dot{x}_{j}}{c^2}\left(1 - \left(\frac{\dot{x}}{c}\right)^{2}\right)^{-3/2},
\]
it is verified that the conditions (\ref{eq:varcond1}a) and (\ref{eq:varcond1}d) hold. Therefore, the relativistic Kepler system \eqref{eq:relaKepler} can be written in a variational formulation. The Lagrangian is taken as
\begin{align}\label{eq:Lagrangian_relaKepler}
    L(x, \dot{x}) = -\frac{c^2}{\gamma(\dot{x})} + \frac{1}{|x|}.
\end{align}

We reformulate the equation \eqref{eq:relaKepler} by introducing the proper time $\tau$ and the momentum $u = \gamma(\dot{x}) \dot{x}$. In the new coordinates, the relativistic Kepler system becomes 
\begin{equation}\label{eq:relaKepler_pro}
    \begin{alignedat}{2}
        \frac{dt}{d\tau} &= \gamma, & \quad
        \frac{dx}{d\tau} &= u, \\
        \frac{d\gamma}{d\tau} &= - \nabla \phi(x)\cdot u, & \quad
        \frac{du}{d\tau} &= - \gamma \nabla \phi(x).
    \end{alignedat}
\end{equation}
In a compact form, it is 
\begin{equation}\label{eq:relaKepler_promat}
    {\bf M} \ddot{\mathbf{x}} = F({\bf x}) \dot{\mathbf{x}},
\end{equation}
where $\cdot$ denotes the differentiation with respect to $\tau$, ${\bf x} = (t, x^{\top})^{\top}$, 
$
{\bf M} = \begin{bmatrix}
    -1 & \\
    & \mathrm{I}_{\mathrm{N}}
\end{bmatrix}
$, and 
$
F({\bf x}) = \begin{bmatrix}
    0 & \nabla^{\top} \phi(x) \\
    - \nabla \phi(x) & \mathbf{0}
\end{bmatrix}.
$
According to Theorem \ref{Thm:M=M(dot x)}, the system \eqref{eq:relaKepler_promat} is derived from a variational principle. The Lagrangian is defined as 
\begin{equation}\label{eq:Lagrangian_relaKepler_pro}
    L({\bf x}, \dot{\mathbf{x}}) = \frac{1}{2}\dot{\mathbf{x}}^{\top} {\bf M} \dot{\mathbf{x}} + A^{\top}({\bf x}) \dot{\mathbf{x}},
\end{equation}
with $A({\bf x}) = (-\phi(x), \mathbf{0}_{1 \times \mathrm{N}})$.

Define $p = \frac{\partial L}{\partial \dot{\mathbf{x}}}({\bf x}, \dot{\mathbf{x}}) = {\bf M}\dot{\mathbf{x}} + A({\bf x})$, the system \eqref{eq:relaKepler_pro} takes a standard Hamiltonian form 
\[
\dot{y} = J^{-1} \nabla H(y),
\]
where $y = ({\bf x}^{\top}, p^{\top})^{\top}$. Here, 
$J = \left[\begin{array}{cc}  & \mathrm{I}_{\mathrm{N}+1} \\ -\mathrm{I}_{\mathrm{N}+1} & \end{array}\right]$ and the Hamiltonian is expressed as 
$H({\bf x}, p) = \frac{1}{2} (p - A({\bf x}))^{\top} {\bf M} (p - A({\bf x}))$. Introducing 
$
\mathbf{v} = \frac{\partial L}{\partial \dot{\mathbf{x}}}({\bf x}, \dot{\mathbf{x}})
- A({\bf x})
= {\bf M} \dot{\mathbf{x}} = \left(-\gamma, u^{\top}\right)^{\top},
$ 
the system \eqref{eq:Lagrangian_relaKepler_pro} is rewritten as a $K$-symplectic system \cite{feng1995collected, feng2010symplectic}
\begin{equation}\label{eq:relaKepler_Ksymmat}
    \dot{{\bf z}} = K^{-1}({\bf z}) \nabla H({\bf z}),
\end{equation}
where ${\bf z} = ({\bf x}^{\top}, \mathbf{v}^{\top})^{\top}$, 
$
K({\bf z}) = \left[
\begin{array}{cc}
    F({\bf x}) & -\mathrm{I}_{\mathrm{N+1}} \\ \mathrm{I}_{\mathrm{N+1}} & \mathbf{0}
\end{array}
\right],
$ 
and the Hamiltonian $H$ is 
$
H(\mathbf{x},\mathbf{v}) = \frac{1}{2} \mathbf{v}^{\top} {\bf M} \mathbf{v}.
$
	
Next, we construct the numerical methods for \eqref{eq:relaKepler_Ksymmat}. The Hamiltonian $H({\bf x}, \mathbf{v})$ can be split into $\mathrm{N}+1$ parts:
\[
H({\bf x}, \mathbf{v}) = - \frac{1}{2}\gamma^2 + \frac{1}{2}\sum\limits_{i=1}^{\mathrm{N}} u_{i}^2 = H_{t} + \sum\limits_{i=1}^{\mathrm{N}} H_{i}.
\]
Substituting this into \eqref{eq:relaKepler_Ksymmat} yields $\mathrm{N}+1$ subsystems, each subsystem remains $K$-symplectic and admits an explicit solution.

The subsystem generated by $H_{t}$ is
\begin{equation}\label{eq:H_t}	
	\begin{alignedat}{2}
		\frac{dt}{d\tau} &= \gamma, & \quad \frac{dx}{d\tau} &= 0, \\
		\frac{d\gamma}{d\tau} &= 0, & \quad \frac{du}{d\tau} &= -\gamma \nabla \phi(x).
	\end{alignedat}
\end{equation}
The exact solution for this subsystem is
\begin{equation}\label{eq:H_t_exact}	
	\begin{alignedat}{2}
		t(\tau + h) &= t + h\gamma(\tau), & \quad x(\tau+h) &= x(\tau), \\
		\gamma(\tau + h) &= \gamma(\tau), & \quad u(\tau+h) &= u(\tau) - h\gamma(\tau)\nabla \phi(x(\tau)).
	\end{alignedat}
\end{equation}
For a given $i$ ($i = 1, \dots, N$), the subsystem corresponding to $H_{i}$ is
\begin{equation}\label{eq:H_{v_i}}	
	\begin{alignedat}{2}
		\frac{dt}{d\tau} &= 0, & \quad \frac{dx}{d\tau} &= u {\bf e}_{i}, \\
		\frac{d\gamma}{d\tau} &= - \frac{\partial \phi(x)}{\partial x_{i}} u_{i}, & \quad \frac{du}{d\tau} &= 0.
	\end{alignedat}
\end{equation}
The exact solution for this subsystem is
\begin{equation}\label{eq:H_{v_i}_exact}	
	\begin{alignedat}{2}
		t(\tau + h) &= t(\tau), & \quad x(\tau+h) &= x(\tau) + hu(\tau) {\bf e}_{i}, \\
		\gamma(\tau + h) &= \gamma(\tau) - \int_{x_{i}(\tau)}^{x_{i}(\tau) + hu_{i}(\tau)} \frac{\partial \phi(x)}{\partial x_{i}} dx_{i}, & \quad u(\tau+h) &= u(\tau).
	\end{alignedat}
\end{equation}
The integral in \eqref{eq:H_{v_i}_exact} can be computed explicitly as
\[
\int_{x_{i}(\tau)}^{x_{i}(\tau) + hu_{i}(\tau)} \frac{\partial \phi(x)}{\partial x_{i}} \, dx_{i} = \phi(x)\Big|_{x(\tau)}^{x(\tau) + hu(\tau) {\bf e}_{i}}.
\]
Thus, the exact solution \eqref{eq:H_{v_i}_exact} for this subsystem to $H_{i}$ reduces to 
\begin{equation}\label{eq:H_{v_i}_exact1}	
	\begin{alignedat}{2}
		t(\tau + h) &= t(\tau), & \quad x(\tau+h) &= x(\tau) + hu(\tau) {\bf e}_{i}, \\
		\gamma(\tau + h) &= \gamma(\tau) - \phi(x)\Big|_{x(\tau)}^{x(\tau) + hu(\tau) {\bf e}_{i}}, & \quad u(\tau+h) &= u(\tau).
	\end{alignedat}
\end{equation}
Composing \eqref{eq:H_t_exact} and \eqref{eq:H_{v_i}_exact1}, we obtain a $K$-symplectic method for \eqref{eq:relaKepler_pro}
\begin{equation}\label{eq:relaKepler_Ksym}
    \Phi_{h} = \phi_{H_{n}}^{h} \circ \dots \circ \phi_{H_1}^{h} \circ \phi_{H_t}^{h}.
\end{equation}

\begin{theorem}\label{Thm:Ksym = VI}			
 Define the discrete Lagrangian for system \eqref{eq:relaKepler_pro} as
\begin{align}\label{eq:vainteg_{n}}
    \mathbb{L}^{\text{1st}}\left({\bf x}_{n}, {\bf x}_{n+1}, h\right)
    = \frac{1}{2}\frac{\left({\bf x}_{n+1} - {\bf x}_{n}\right)^\top \left({\bf x}_{n+1} - {\bf x}_{n}\right)}{h^2} + {\bf A}^{\top}({\bf x}_{n})\frac{\left({\bf x}_{n+1} - {\bf x}_{n}\right)}{h}.
\end{align}
Then the discrete Euler-Lagrange equation yields the following variational integrator:
\begin{equation}\label{eq:DEL for relaKepler}	
    \begin{aligned}
        &\frac{t_{n+1} - 2 t_{n} + t_{n-1}}{h^2} = - \frac{\phi(x_{n}) - \phi(x_{n-1})}{h}, \\
        &\frac{x_{n+1} - 2 x_{n} + x_{n-1}}{h^2} = - \frac{t_{n+1} - t_{n}}{h}\nabla \phi(x_{n}).
    \end{aligned}
\end{equation}
Also, we prove that the method \eqref{eq:DEL for relaKepler} is equivalent to the $K$-symplectic method \eqref{eq:relaKepler_Ksym}.
\end{theorem}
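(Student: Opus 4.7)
The statement has two parts: (i) the discrete Euler--Lagrange (DEL) equations induced by \eqref{eq:vainteg_{n}} coincide with the pair \eqref{eq:DEL for relaKepler}; (ii) the resulting one-step map is equivalent to the $K$-symplectic composition \eqref{eq:relaKepler_Ksym}. For (i), I would apply $\partial_1 \mathbb{L}^{\text{1st}}({\bf x}_n, {\bf x}_{n+1}, h) + \partial_2 \mathbb{L}^{\text{1st}}({\bf x}_{n-1}, {\bf x}_n, h) = 0$ componentwise along ${\bf x} = (t, x^\top)^\top$, exploiting that only the first entry of ${\bf A}({\bf x}) = (-\phi(x), \mathbf{0}_{1\times \mathrm{N}})^{\top}$ is nonzero. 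The quadratic part of $\mathbb{L}^{\text{1st}}$ contributes a centered second-difference operator in each coordinate; the linear part $h^{-1}{\bf A}^{\top}({\bf x}_n)({\bf x}_{n+1}-{\bf x}_n)$ contributes a cross-term proportional to $\nabla\phi(x_n)(t_{n+1}-t_n)/h$ in the spatial equation (through $\partial_{x_n}[A_0({\bf x}_n)(t_{n+1}-t_n)]$) and a boundary difference of the form $(A_0({\bf x}_n)-A_0({\bf x}_{n-1}))/h = -(\phi(x_n)-\phi(x_{n-1}))/h$ in the time equation. Assembling these, together with the signature of ${\bf M}=\mathrm{diag}(-1,\mathrm{I}_{\mathrm{N}})$, produces precisely \eqref{eq:DEL for relaKepler}.

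For (ii), my plan is to close the DEL system into a one-step map by means of the discrete Legendre transforms of Definition \ref{def:disLeg}, and then match it term-by-term with an explicit expansion of $\Phi_h$. Computing $p_n = -h\partial_1\mathbb{L}^{\text{1st}}$ and $p_{n+1} = h\partial_2\mathbb{L}^{\text{1st}}$, and using the continuous identification $p = {\bf M}\dot{\bf x} + {\bf A}({\bf x}) = (-\gamma - \phi(x),\, u^\top)^\top$, I expect the time and spatial components to give
\[
    \gamma_n = \frac{t_{n+1}-t_n}{h}, \qquad u_{n+1} = \frac{x_{n+1}-x_n}{h},
\]
and reading off the remaining components, $u_n = u_{n+1} + h\gamma_n \nabla\phi(x_n)$ together with $\gamma_{n+1} = \gamma_n - [\phi(x_{n+1})-\phi(x_n)]$. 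On the $K$-symplectic side I would expand $\Phi_h = \phi_{H_{\mathrm{N}}}^h \circ \cdots \circ \phi_{H_1}^h \circ \phi_{H_t}^h$ by tracking the state $(t,x,\gamma,u)$ through each sub-flow. The key observation is that $\phi_{H_t}^h$ changes $(t,u)$ but leaves $(x,\gamma)$ fixed, while each $\phi_{H_i}^h$ changes only $x_i$ and $\gamma$ and leaves $u$ invariant; hence the $\mathrm{N}$ spatial sub-flows all act with the common momentum $u_n - h\gamma_n\nabla\phi(x_n)$, and the $\gamma$-increments telescope into $-[\phi(x_{n+1})-\phi(x_n)]$. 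The resulting closed-form update for $(t,x,\gamma,u)$ then coincides exactly with the one-step map obtained from DEL under the Legendre identifications above.

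The main obstacle is essentially bookkeeping. One must track the indefinite signature of ${\bf M}$ consistently through each Legendre transform so that the time and spatial momenta carry the correct relative signs, and one must verify that the telescoping across the $\mathrm{N}$ spatial sub-flows $\phi_{H_i}^h$ really collapses back to $\phi$-values at $x_n$ and $x_{n+1}$ alone, with no residual dependence on the intermediate positions $x^{(i)} = x_n + h\sum_{j\le i} u^{(0)}_j {\bf e}_j$. Once these signs and telescopings are in hand, the comparison between the variational and composition forms becomes a direct algebraic identification.
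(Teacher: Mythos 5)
Your proposal is correct and follows essentially the same route as the paper's proof: both pass to the discrete Legendre transforms shifted by ${\bf A}$, identify the resulting variables with $(-\gamma_n,u_n^{\top})^{\top}$, and match the reformulated one-step map with the composition \eqref{eq:relaKepler_Ksym}, your explicit expansion of the sub-flows and the telescoping of the $\gamma$-updates merely spelling out what the paper asserts in one line. Incidentally, your update $\gamma_{n+1}=\gamma_n-\left[\phi(x_{n+1})-\phi(x_n)\right]$ is the sign actually consistent with \eqref{eq:relaKepler_Ksym} (the paper's proof writes a plus there, an apparent typo), and your attention to the signature of ${\bf M}$ in the quadratic term is exactly what is needed for the stated equations \eqref{eq:DEL for relaKepler} to come out.
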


\begin{proof}
    From the discrete Euler-Lagrange equation \eqref{eq:DEL for relaKepler} with \(\mathbb{L}^{\text{1st}}\), we introduce the following transformations:
    \[
    {\bf v}_{n} = - h \frac{\partial \mathbb{L}^{\text{1st}}\left({\bf x}_{n}, {\bf x}_{n+1} \right)}{\partial {\bf x}_{n}} - {\bf A}({\bf x}_{n})
    \]
    and
    \[
    {\bf v}_{n + 1} = h \frac{\partial \mathbb{L}^{\text{1st}}\left({\bf x}_{n}, {\bf x}_{n+1} \right)}{\partial {\bf x}_{n+1}} - {\bf A}({\bf x}_{n+1}).
    \]
    Denote \({\bf v}_{n} = \left( -\gamma_{n}, u_{n} \right)\), from the above equality we have
    \begin{align}\label{eq:Thm_Ksym=VI_eqs}
        \begin{alignedat}{3}
            \gamma_{n} &= \frac{t_{n+1} - t_{n}}{h}, & \quad
            u_{n} &= \frac{x_{n+1} - x_{n}}{h} + \left(t_{n+1} - t_{n}\right)\nabla \phi(x_{n}), \\
            \gamma_{n + 1} &= \frac{t_{n+1} - t_{n}}{h} + \left(\phi(x_{n+1}) - \phi(x_{n})\right), & \quad
            u_{n+1} &= \frac{x_{n+1} - x_{n}}{h}.
        \end{alignedat}
    \end{align}
    Reformulating \eqref{eq:Thm_Ksym=VI_eqs} leads to 
    \begin{align}
        \begin{alignedat}{3}
            t_{n+1} &= t_{n} + h\gamma_{n}, & \quad
            x_{n+1} &= x_{n} + h u_{n} - h\left(t_{n+1} - t_{n}\right)\nabla \phi(x_{n}), \\
            \gamma_{n+1} &= \gamma_{n} + \left( \phi(x_{n+1}) - \phi(x_{n}) \right), & \quad
            u_{n+1} &= u_{n} - \left(t_{n+1} - t_{n}\right)\nabla \phi(x_{n}),
        \end{alignedat}
    \end{align}
    which are exactly the $K$-symplectic methods \eqref{eq:relaKepler_Ksym}.
\end{proof}

From \eqref{eq:vainteg_{n}}, it is known that the adjoint discrete Lagrangian is
\begin{align*}
	\mathbb{L}^*\left(\mathbf{x}_{n}, \mathbf{x}_{n+1}, h\right)
	= \mathbb{L}^{\text{1st}}\left(\mathbf{x}_{n+1}, \mathbf{x}_{n},-h\right)
	= \frac{1}{2}\frac{\left({\bf x}_{n+1} - {\bf x}_{n}\right)^\top \left({\bf x}_{n+1} - {\bf x}_{n}\right)}{h^2} 
	+ {\bf A}^{\top}({\bf x}_{n})\frac{\left({\bf x}_{n+1} - {\bf x}_{n}\right)}{h}.
\end{align*}
Similarly to Theorem \ref{Thm:Ksym = VI}, the following theorem holds.
\begin{theorem}\label{Thm:Ksym = VI*}
	The variational integrator deduced from $\mathbb{L}^*\left(\mathbf{x}_{n}, \mathbf{x}_{n+1},h\right)$
	is equal to
	\[
	\Phi_{h}^* = \phi_{H_t}^{h} \circ \phi_{H_1}^{h} \circ \dots \circ \phi_{H_{n}}^{h}.
	\]
\end{theorem}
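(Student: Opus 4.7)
The plan is to follow the same strategy as the proof of Theorem \ref{Thm:Ksym = VI}, only applied to the adjoint Lagrangian. The starting point is the identity $\mathbb{L}^*({\bf x}_n, {\bf x}_{n+1}, h) = \mathbb{L}^{\text{1st}}({\bf x}_{n+1}, {\bf x}_n, -h)$ from Definition \ref{def:adj_Langrangian}. Differentiating and tracking the sign flip exactly as in the first part of the proof of Theorem \ref{Thm:Hcompsition_2} (an argument that is formally independent of the precise form of $\mathbb{L}^{\text{1st}}$), the discrete Legendre transforms of $\mathbb{L}^*$ at $({\bf x}_n, {\bf x}_{n+1}, h)$ coincide with those of $\mathbb{L}^{\text{1st}}$ at $({\bf x}_{n+1}, {\bf x}_n, -h)$. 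This shows that the variational integrator produced by $\mathbb{L}^*$ is precisely the adjoint map $\Phi_h^*$ of the integrator produced by $\mathbb{L}^{\text{1st}}$, which, by Theorem \ref{Thm:Ksym = VI}, is the composition \eqref{eq:relaKepler_Ksym}.

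It then remains to rewrite $\Phi_h^*$ in the claimed form. I would invoke the general functorial property $(\psi^h \circ \varphi^h)^* = \varphi^{h,*} \circ \psi^{h,*}$ together with the observation that each factor $\phi_{H_t}^h$ and $\phi_{H_i}^h$ in \eqref{eq:relaKepler_Ksym} is the \emph{exact} flow of its corresponding Hamiltonian subsystem (see \eqref{eq:H_t_exact} and \eqref{eq:H_{v_i}_exact1}). Exact flows satisfy $\phi_H^{-h} = (\phi_H^h)^{-1}$, hence each is self-adjoint: $(\phi_H^h)^* = \phi_H^h$. Applying the adjoint identity inductively to $\Phi_h = \phi_{H_n}^h \circ \dots \circ \phi_{H_1}^h \circ \phi_{H_t}^h$ reverses the order of the composition without altering any individual factor, yielding
$$\Phi_h^* = (\phi_{H_t}^h)^* \circ (\phi_{H_1}^h)^* \circ \dots \circ (\phi_{H_n}^h)^* = \phi_{H_t}^h \circ \phi_{H_1}^h \circ \dots \circ \phi_{H_n}^h,$$
which is exactly the claimed expression.

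If a fully direct verification is preferred, one can instead write out the discrete Euler--Lagrange equation for $\mathbb{L}^*$, introduce ${\bf v}_n = -h\,\partial_{{\bf x}_n}\mathbb{L}^* - {\bf A}({\bf x}_n)$ and ${\bf v}_{n+1} = h\,\partial_{{\bf x}_{n+1}}\mathbb{L}^* - {\bf A}({\bf x}_{n+1})$, expand componentwise under ${\bf v} = (-\gamma, u)^\top$, and rearrange into four explicit update rules. A matching of these rules against the successive application of \eqref{eq:H_{v_i}_exact1} for $i = n, n-1, \dots, 1$ followed by \eqref{eq:H_t_exact}, reproduces $\phi_{H_t}^h \circ \phi_{H_1}^h \circ \dots \circ \phi_{H_n}^h$ componentwise.

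The main (and only real) obstacle is the bookkeeping: one must carefully propagate the $h \to -h$ substitution through the chain rule when differentiating $\mathbb{L}^*$, and verify that the order reversal produced by iterating $(\psi \circ \varphi)^* = \varphi^* \circ \psi^*$ lines up with the specific indexing used in \eqref{eq:relaKepler_Ksym}. Neither step is conceptually deep, so once the correspondence ``variational integrator of $\mathbb{L}^*$ $\equiv$ adjoint of variational integrator of $\mathbb{L}^{\text{1st}}$'' is established, the theorem follows mechanically.
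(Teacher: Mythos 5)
Your argument is correct, but your primary route is genuinely different from the paper's. The paper proves Theorem \ref{Thm:Ksym = VI*} ``similarly to Theorem \ref{Thm:Ksym = VI}'', i.e.\ by the direct computation you only sketch as a fallback: write the discrete Euler--Lagrange/Legendre relations for $\mathbb{L}^{*}$ with the shifted momenta ${\bf v}_{n}=-h\,\partial_{1}\mathbb{L}^{*}-{\bf A}({\bf x}_{n})$, ${\bf v}_{n+1}=h\,\partial_{2}\mathbb{L}^{*}-{\bf A}({\bf x}_{n+1})$, solve for the explicit update, and match it against the reversed composition. You instead argue structurally: the integrator of the adjoint Lagrangian is the adjoint method (as in the first part of Theorem \ref{Thm:Hcompsition_2}), the adjoint of a composition reverses the order of the factors, and each factor in \eqref{eq:relaKepler_Ksym} is the exact flow of an autonomous subsystem and hence self-adjoint, so $\Phi_{h}^{*}=\phi_{H_t}^{h}\circ\phi_{H_1}^{h}\circ\dots\circ\phi_{H_{n}}^{h}$. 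This is shorter, reuses already-proved general facts, and explains \emph{why} the order is reversed; the paper's computation, by contrast, yields the explicit update formulas and needs no auxiliary lemmas. The one point you should make explicit is that Theorem \ref{Thm:Hcompsition_2} is stated for the plain discrete Legendre transforms, whereas Theorem \ref{Thm:Ksym = VI} uses transforms shifted by $-{\bf A}({\bf x})$; since the shift $p\mapsto {\bf v}=p-{\bf A}({\bf x})$ is an $h$-independent change of variables applied at the corresponding endpoints, it commutes with taking adjoints ($\Phi_{h}^{*}=\Phi_{-h}^{-1}$ is preserved under conjugation by a fixed phase-space map), so your identification of the $\mathbb{L}^{*}$-integrator with $\Phi_{h}^{*}$ in the $({\bf x},{\bf v})$ variables does go through; as written this compatibility is asserted rather than checked, but it is a minor supplement, not a gap.
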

Furthermore, with the second order discrete Lagrangian
\begin{align}\label{eq:vainteg_2a}
	\mathbb{L}^{\text{2nd}}(\mathbf{x}_{n},\mathbf{x}_{n+1}, h)
	= \frac{1}{2} \mathbb{L}^*\left(\mathbf{x}_{n}, \mathbf{x}_{n + 1/2}, \frac{h}{2}\right)
	+ \frac{1}{2} \mathbb{L}^{\text{1st}}\left(\mathbf{x}_{n + 1/2}, \mathbf{x}_{n+1}, \frac{h}{2}\right),
\end{align}
and
\begin{align}\label{eq:vainteg_2b}
	\mathbb{L}^{\text{2nd}}(\mathbf{x}_{n},\mathbf{x}_{n+1}, h)
	= \frac{1}{2} \mathbb{L}^{\text{1st}}\left(\mathbf{x}_{n}, \mathbf{x}_{n + 1/2}, \frac{h}{2}\right)
	+ \frac{1}{2} \mathbb{L}^*\left(\mathbf{x}_{n + 1/2}, \mathbf{x}_{n+1}, \frac{h}{2}\right),
\end{align}
we have the following theorem.
\begin{theorem}\label{Thm:Ksym = VI2}
	The variational integrators deduced from \eqref{eq:vainteg_2a} and \eqref{eq:vainteg_2b} are equivalent to
	\[
	\Phi^{\text{2nd}}_{h}=\phi_{H_{n}}^{h/2} \circ \dots \circ \phi_{H_1}^{h/2} \circ \phi_{H_t}^{h} \circ \phi_{H_1}^{h/2} \circ \dots \circ \phi_{H_{n}}^{h/2},
	\]
	and
	\[
	\Phi^{\text{2nd}}_{h}=\phi_{H_t}^{h/2} \circ \phi_{H_1}^{h/2} \circ \dots \circ  \phi_{H_{n}}^{h} \circ \dots \circ \phi_{H_1}^{h/2} \circ \phi_{H_t}^{h/2},
	\]
	respectively.
\end{theorem}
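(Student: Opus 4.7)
The plan is to build on the equivalences already established in Theorems \ref{Thm:Ksym = VI} and \ref{Thm:Ksym = VI*}, and to transplant the argument of Theorem \ref{Thm:Hcompsition_2} into the present $K$-symplectic setting. The proof of Theorem \ref{Thm:Hcompsition_2} relies only on the definition of the discrete Legendre transforms, the discrete Euler--Lagrange equations, and the matching of a single intermediate momentum $p_{n+1/2}$ at the node $\mathbf{x}_{n+1/2}$. None of these ingredients depend on the specific form of the kinetic term, so the argument carries over verbatim to the Lagrangian \eqref{eq:vainteg_{n}} with the extra one-form contribution $\mathbf{A}^\top(\mathbf{x}_n)(\mathbf{x}_{n+1}-\mathbf{x}_n)/h$. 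Consequently, the integrator determined by \eqref{eq:vainteg_2a} (which has the adjoint $\mathbb{L}^*$ on the first half-step and $\mathbb{L}^{\text{1st}}$ on the second, as in \eqref{eq:VI2_2}) coincides with $\Phi_{h/2}\circ\Phi_{h/2}^*$, while the integrator from \eqref{eq:vainteg_2b} (in the pattern of \eqref{eq:VI2_1}) coincides with $\Phi_{h/2}^*\circ\Phi_{h/2}$.

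Next, I would substitute the explicit split expressions from Theorems \ref{Thm:Ksym = VI} and \ref{Thm:Ksym = VI*}, namely
\[
\Phi_{h/2} = \phi_{H_n}^{h/2}\circ\cdots\circ\phi_{H_1}^{h/2}\circ\phi_{H_t}^{h/2},\qquad
\Phi_{h/2}^* = \phi_{H_t}^{h/2}\circ\phi_{H_1}^{h/2}\circ\cdots\circ\phi_{H_n}^{h/2}.
\]
For \eqref{eq:vainteg_2a}, the rightmost factor of $\Phi_{h/2}$ and the leftmost factor of $\Phi_{h/2}^*$ are both $\phi_{H_t}^{h/2}$ and sit adjacent in the composition $\Phi_{h/2}\circ\Phi_{h/2}^*$; since $\phi_{H_t}^{h}$ is the exact time-$h$ flow of the subsystem \eqref{eq:H_t}, the group property $\phi_{H_t}^{h/2}\circ\phi_{H_t}^{h/2}=\phi_{H_t}^{h}$ collapses the two adjacent factors and yields the first claimed formula. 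The second formula follows by the same bookkeeping applied to $\Phi_{h/2}^*\circ\Phi_{h/2}$, where now the adjacent factors $\phi_{H_n}^{h/2}$ merge into $\phi_{H_n}^{h}$ by the same flow-group property.

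The main point that requires care is the transplantation of the proof of Theorem \ref{Thm:Hcompsition_2}: one must verify that the junction identity corresponding to \eqref{eq:Hcom2_eq3} still holds for the Lagrangian \eqref{eq:vainteg_{n}}, i.e., that the intermediate momentum defined from the left half-step via $\mathbb{L}^*$ (resp.\ $\mathbb{L}^{\text{1st}}$) agrees with the one defined from the right half-step via $\mathbb{L}^{\text{1st}}$ (resp.\ $\mathbb{L}^*$) after the discrete Euler--Lagrange equation at $\mathbf{x}_{n+1/2}$ is imposed. Because the one-form term $\mathbf{A}^\top(\mathbf{x})\dot{\mathbf{x}}$ enters the discrete Legendre transforms through a simple additive $\pm \mathbf{A}(\mathbf{x})$ contribution (as already exploited in the proof of Theorem \ref{Thm:Ksym = VI}), this matching reduces to the same linear identity used before, and the rest of the argument is the bookkeeping described above.
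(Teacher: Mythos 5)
Your proposal is correct and follows essentially the route the paper leaves implicit: the paper states this theorem without a written proof, relying on the $K$-symplectic analogue of Theorem \ref{Thm:Hcompsition_2} together with Theorems \ref{Thm:Ksym = VI} and \ref{Thm:Ksym = VI*}, which is exactly your argument. Your identification of \eqref{eq:vainteg_2a} with $\Phi_{h/2}\circ\Phi_{h/2}^*$ and \eqref{eq:vainteg_2b} with $\Phi_{h/2}^*\circ\Phi_{h/2}$, the merging of the adjacent $\phi_{H_t}^{h/2}$ (resp. $\phi_{H_{n}}^{h/2}$) factors by the exact-flow semigroup property, and your check that the additive shift by $\mathbf{A}(\mathbf{x}_{n+1/2})$ in the discrete Legendre transforms does not disturb the momentum matching at the midpoint all agree with the intended proof.
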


\section{Modified equations for variational integrator}
In this section, we analyze the numerical conservative behaviour of variational integrators constructed above. First, we introduce the concept of modified equations introduced in \cite{vermeeren2017modified} for the second order differential equations in the form of
\begin{align}\label{eq:DE}
    \ddot{x} = f(x).
\end{align}
\begin{definition}
    Denote $x_{n+1} = \Psi(x_{n}, x_{n-1}, h)$ as a numerical method for the second order differential system \eqref{eq:DE}. The formal differential equations
    \begin{align}\label{eq:MDE}
        \ddot{\widetilde x} = \widetilde{f}(\widetilde x, \dot{\widetilde x}, h) = f(\widetilde x) + h f^{[1]}(\widetilde x, \dot{\widetilde x}) + h^2 f^{[2]}(\widetilde x, \dot{\widetilde x}) + \cdots
    \end{align}
    are called its modified equations, if the exact solution ${\widetilde x}(t)$ of \eqref{eq:MDE} coincides with the numerical solution $x_{n}$ at $t=nh$.
\end{definition}
	
Expand the difference $x(t+h) - \Psi(x(t), x(t-h), h)$ around $x(t)$, which is expressed as
\begin{align}\label{eq:NM_Taylor}
    x(t+h) - \Psi(x(t), x(t-h), h) = \ddot{x} - \widetilde{f}(x, \dot x, h) + h g_1[x] + h^2 g_{2}[x] + \cdots,
\end{align}
where $g_{i}[x], i=1,2,\cdots$ are functions depending on $x$ and its derivatives.
Introduce the intermediate variable $v = \dot{x}$, it follows from \eqref{eq:MDE} that
\begin{align*}
    \ddot{x} &= \dot{v} = \widetilde{f}(x, v, h) = f(x) + h f^{[1]}(x, v) + h^2 f^{[2]}(x, v) + \cdots,\\
    x^{(3)}
    & = \ddot{v} = \partial_x f(x) v + h\left(\partial_x f^{[1]}(x,v) v + \partial_v f^{[1]}(x, v) f(x) + \partial_v f(x) f^{[1]}(x, v)\right) + \cdots,\\
    & \cdots\cdots
\end{align*}
Substituting the above expressions into \eqref{eq:NM_Taylor} and comparing the coefficients in $h$, we get
\begin{align*}
    h^1: \quad &f^{[1]}(x, v) + g_1(x, v, f, \partial_x f v + \partial_v f f), \\
    h^2: \quad &f^{[2]}(x, v) + g_2(x, v, f, \partial_x f v + \partial_v f f)
    + \partial_{\dot{v}} g_1(x, v, f, \partial_x f v + \partial_v f f) f^{[1]} \\
    &+ \partial_{\ddot{v}} g_1(x, v, f, \partial_x f v + \partial_v f f)\left(\partial_x f^{[1]}(x, v) v + \partial_v f^{[1]}(x, v) f(x) + \partial_v f(x) f^{[1]}(x, v)\right) + \cdots, \\
    &\cdots \cdots
\end{align*}
Therefore, we have the following theorem.
\begin{theorem}
    The coefficients $f^{[k]}$ in the modified equations \eqref{eq:MDE} are uniquely determined recursively by $f^{[1]}, \ldots$, $f^{[k-1]}, g_1, \ldots, g_{k-1}$, and their derivatives.
\end{theorem}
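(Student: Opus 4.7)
The plan is to prove the theorem by strong induction on $k$, building directly on the expansion \eqref{eq:NM_Taylor} and its order-by-order collection of coefficients in $h$ displayed just above the statement. The central observation is that the unknown coefficient $f^{[k]}$ enters the $h^{k}$-coefficient of \eqref{eq:NM_Taylor} in exactly one place and in a purely linear way, so that once the lower-order coefficients have been fixed, the resulting equation can be solved for $f^{[k]}$ in closed form.

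For the base case $k=1$, the displayed coefficient of $h^{1}$ reads
\[
f^{[1]}(x,v) + g_{1}\!\left(x, v, f, \partial_x f\, v + \partial_v f\, f\right) = 0,
\]
so $f^{[1]}$ is determined uniquely in terms of $f$ and $g_{1}$. For the inductive step I assume $f^{[1]}, \ldots, f^{[k-1]}$ have already been identified, and analyze the two routes by which $f^{[k]}$ can appear in the $h^{k}$-coefficient of \eqref{eq:NM_Taylor}: (i) directly through $\widetilde{f}(x,v,h) = f + h f^{[1]} + \cdots + h^{k} f^{[k]} + \cdots$, contributing a single linear term in $f^{[k]}$ at order $h^{k}$; and (ii) indirectly through the functions $g_{i}[x]$, since each such function depends on higher derivatives $x^{(m)}$ that are expressed via the modified equation by iteratively differentiating $\dot{v} = \widetilde{f}(x,v,h)$.

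The crux, and the main obstacle, is to verify rigorously that route (ii) cannot contribute to the $h^{k}$-coefficient. I would handle this by a power-series bookkeeping argument: regarded as a formal power series in $h$, the series $\widetilde{f}$ contains $f^{[k]}$ only starting at the $h^{k}$-term, and iterated $t$-differentiation (where each differentiation replaces $\dot{v}$ by $\widetilde{f}$ and $\dot{x}$ by $v$) preserves this property, so the dependence of each $g_{i}[x]$ on $f^{[k]}$ begins no earlier than order $h^{k}$. Multiplying by the explicit prefactor $h^{i}$ then pushes every $f^{[k]}$-contribution arising from $h^{i}g_{i}[x]$ to order $h^{i+k}\geq h^{k+1}$ whenever $i\geq 1$, leaving route (i) as the unique source of $f^{[k]}$ at order $h^{k}$. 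The resulting $h^{k}$-coefficient equation then takes the form
\[
f^{[k]}(x,v) = R_{k}\bigl(x, v, f, f^{[1]}, \ldots, f^{[k-1]}, g_{1}, \ldots, g_{k}, \text{their derivatives}\bigr),
\]
with $R_{k}$ a completely explicit expression obtained by collecting all non-$f^{[k]}$ contributions at order $h^{k}$; this determines $f^{[k]}$ uniquely and completes the induction.
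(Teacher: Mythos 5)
Your proposal is correct and follows essentially the same route as the paper: expand the method as in \eqref{eq:NM_Taylor}, substitute the modified equation to eliminate higher derivatives, and compare coefficients of $h^{k}$, where $f^{[k]}$ appears linearly and all other contributions involve only lower-order data. Your power-series bookkeeping (that $f^{[k]}$ enters each $h^{i}g_{i}$ term only at order $h^{i+k}\geq h^{k+1}$) simply makes explicit the uniqueness argument the paper leaves implicit, and your inclusion of $g_{k}$ in the recursion is consistent with the paper's displayed $h^{2}$-coefficient.
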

	
Generally, the modified equation is a divergent formal series. However, for some special cases, we can obtain a convergent modified equation. In the following example, we show that for a linear second order differential equation, the corresponding modified equation is convergent.

Consider the linear second order differential equation
\begin{equation}\label{eq:linearsystem}
    \ddot{x} = - \lambda x,\quad \lambda>0.
\end{equation}
For the linear system \eqref{eq:linearsystem}, we apply the following numerical method:
\begin{align}\label{eq:NumMet_linear}
    \frac{x_{n+1} - 2x_{n} + x_{n-1}}{h^2} = -\lambda x_{n}.
\end{align}
It is easy to verify that the difference equation \eqref{eq:NumMet_linear} has a solution $x_{n}$, which is written in the following form with initial values $x_{0} = x(t_{0})$ and $x_{1}=x(t_{0}+h)$:
\[
    x_{n} = a e^{-2in\theta} + b e^{2in\theta}, \quad \theta = \arcsin\left(\frac{\sqrt{\lambda}h^2}{2}\right),
\]
where $a$ and $b$ are determined by $x_{0}$ and $x_{1}$.
Denote $x(t) = a e^{-2it\theta/h} + b e^{2it\theta/h}$, and from \cite{vermeeren2018modified}, it is known that
\begin{align}\label{eq:chebyshev}
    x(t - j h) -2x(t) + x(t + j h)
    = 2 \left(T_j\left(1 - \frac{\lambda h^2}{2}\right)-1\right) x(t)
    = (-1)^j \lambda^{j}h^{2j}x(t) + P_{2j-2}(h) x(t),
\end{align}
where \(T_j\) is the $j$-th order Chebyshev polynomial of the first kind, and $P_{2j-2}$ is the polynomial of degree $2j-2$.
Expand the left hand side of \eqref{eq:chebyshev} around $t$ for $j=1,2,\cdots$, we have
\begin{align}\label{eq:taylor}
    x(t - j h) - 2 x(t) + x(t + j h) = (jh)^2 \ddot{x}(t) + \mathcal{O}(h^4).
\end{align}
For $j=1, 2, \cdots, k$, we get
\begin{align}\label{eq:expand}
    \begin{split}
        x(t - h) - 2 x(t) + x(t + h) 
        &=  h^2 \ddot{x}(t)+ \frac{2 h^{4}}{4!} x^{(4)}(t) + \cdots +\frac{2 h^{2k}}{(2k)!} x^{(2k)}(t) + \mathcal{O}(h^{2k+2}),\\
        x(t - 2 h) - 2 x(t) + x(t + 2 h) 
        &=  (2h)^2 \ddot{x}(t) + \frac{2 (2h)^{4}}{4!} x^{(4)}(t) + \cdots + \frac{2 (2h)^{2k}}{(2k)!} x^{(2k)}(t)+\mathcal{O}(h^{2k+2}),\\
        &\cdots \cdots \\ 
        x(t - k h) - 2 x(t) + x(t + k h)
        &= (kh)^2 \ddot{x}(t)+ \frac{2 (kh)^{4}}{4!} x^{(4)}(t) + \cdots + \frac{2 (kh)^{2k}}{(2k)!} x^{(2k)}(t)+\mathcal{O}(h^{2k+2}).
    \end{split}
\end{align}
Denote
\[
    A= \begin{bmatrix}
    1 & 1 & \cdots & 1 \\
    2^2 & 2^4 & \cdots & 2^{2k} \\
    \vdots & \vdots & \ddots & \vdots \\
    k^2 & k^4 & \cdots & k^{2k}
    \end{bmatrix}, \quad
    B= \begin{bmatrix}
    x(t-h) - 2x(t) + x(t+h) & 1 & \cdots & 1 \\
    \vdots & \vdots & \ddots & \vdots \\
    x(t-kh) - 2x(t) + x(t+kh) & k^4 & \cdots &k^{2k}
    \end{bmatrix}.
\]
Using Cramer's rule, from \eqref{eq:expand} we obtain
\begin{equation}\label{eq:coe_of_ddot}
    \ddot{x}(t) = h^{-2} \frac{\det(B)}{\det(A)} + \mathcal{O}(h^{2k}).
\end{equation}
Substituting \eqref{eq:chebyshev} into \(B\), from \eqref{eq:coe_of_ddot} the coefficient of \(h^{2k-2}\) is calculated as \(-\frac{2(k-1)!^2 \lambda^k}{(2k)!} x(t)\).
This implies the modified equation of \eqref{eq:NumMet_linear} has the closed form
\[
    \ddot{x} = -\sum_{k=1}^\infty \frac{2(k-1)!^2}{(2k)!} h^{2k-2} \lambda^k x.
\]
	
As follows, we consider the system \eqref{eq:DE} with $f(x)=-\nabla \phi(x)$. It is known that the system  has  a variational formulation.  However, the numerical discretizations  of  system are usually not variational. By the way of modified equations, the numerical discretization is variational if and only its corresponding modified equation has a variational formulation. According to Theorem \ref{Thm:M}, this implies that the coefficients of the modified equations $f^{[k]}, k=1,2,\cdots$ in \eqref{eq:MDE} must satisfy
\begin{align*}
    &\frac{\partial f^{[k]}_{i}}{\partial \dot x_{j}} + \frac{\partial f^{[k]}_{j}}{\partial \dot x_{i}}=0,\\
    &\frac{\partial f^{[k]}_{i}}{\partial x_{j}}-\frac{\partial f^{[k]}_{j}}{\partial x_{i}}+\frac{d}{dt}\frac{\partial f^{[k]}_{i}}{\partial \dot x_{j}}=0.
\end{align*}
The corresponding Lagrangian of the modified equations is shown further as the modified Lagrangian denoted by $\mathbb{L}_{\text{mod}}$. To analyze the numerical behavior of  variational integrator, in the following  we introduce the concept of \emph{modified Lagrangian} \cite{vermeeren2017modified}.
	
\begin{definition}
The formal function given by
\begin{align}\label{eq:modL}
    \mathbb{L}_{\text{mod}}(x, \dot{x}, h) = L(x, \dot{x}) + h L^{[1]}(x, \dot{x}) + h^2 L^{[2]}(x, \dot{x}) + \cdots
\end{align}
is said to be the modified Lagrangian if the continuous solution of the corresponding Euler–Lagrange equations
\begin{align}\label{eq:mod_EL}
    \frac{\partial \mathbb{L}_{\text{mod}}}{\partial x}(x, \dot{x}, h) - \frac{d}{dt} \frac{\partial \mathbb{L}_{\text{mod}}}{\partial \dot{x}}(x, \dot{x}, h) = 0
\end{align}
and the numerical solution of the variational integrator satisfies \( x(jh) = x_{j} \).
\end{definition}

An alternative definition  of modified Lagrangian  presented  in \cite{vermeeren2017modified} is as follows
\begin{equation}\label{eq:Lmod}
    \int_{t_{n}}^{t_{n+1}} \mathbb{L}_{\text{mod}}(x(t), \dot{x}(t), h) dt = h \mathbb{L}(x_{n}, x_{n+1}, h) \approx \int_{t_{n}}^{t_{n+1}} L\left(x(t), \dot{x}(t)\right) dt,
\end{equation}
where $t_{n} = t_{0} + nh$. In fact, by differentiating \eqref{eq:Lmod} on both sides w.r.t to $x_{n}$, we obtain
\begin{align*}
    \partial_1 \mathbb{L}(x_{n}, x_{n+1}, h) + \partial_2 \mathbb{L}(x_{n-1}, x_{n}, h)
    &=  \int_{t_{n-1}}^{t_{n+1}} \left( \frac{\partial \mathbb{L}_{\text{mod}}}{\partial x} \cdot \frac{\partial x(t)}{\partial x_{n}}
    + \frac{\partial \mathbb{L}_{\text{mod}}}{\partial \dot{x}} \cdot \frac{\partial \dot{x}(t)}{\partial x_{n}} \right) dt\\
    &=  \int_{t_{n-1}}^{t_{n+1}} \left( \frac{\partial \mathbb{L}_{\text{mod}}}{\partial x}
    - \frac{d}{dt} \frac{\partial \mathbb{L}_{\text{mod}}}{\partial \dot{x}} \right) \cdot \frac{\partial x(t)}{\partial x_{n}} dt
    + \left[ \frac{\partial x(t)}{\partial x_{n}} \right]\Bigg|_{t_{n-1}}^{t_{n+1}}.
\end{align*}
Expand the discrete Lagrangian $\mathbb{L}(x(t), x(t+h), h)$  at  $x(t+\frac{h}{2})$, it reads
\[
    \mathbb{L}(x(t), x(t+h), h) 
    = \mathbb{L}\left([x(t+\frac{h}{2})], h\right) 
    :=  L(x, \dot{x}) 
    + \sum_{k=1}^{\infty} h^{k} L_{k}\left([x]\right),
\]
where $x$ denote $x\left(t+\frac{h}{2}\right)$, $[x]$ contains $x$ and its derivatives. According to Proposition 6 in \cite{vermeeren2017modified}, the term $L_{k}$ only depends on $x$ and its $j$ order derivatives $x^{(j)}, j\leq k+1$.
	
By Lemma 1 in \cite{vermeeren2017modified}, the discrete action functional over the interval $[t_{0}, t_{N}]$ can be expressed as
\begin{align}\label{eq:action}
    \begin{split}
        \mathbb{S}(\left\{x_{n}\right\}_{n=0}^{\mathrm{N}})
        &= \sum_{k=0}^{\mathrm{N}-1} h\mathbb{L}(x(t_{0}+kh), x(t_{0}+(k+1)h), h) \\
        &= \int_{t_{0}}^{t_{N}} \sum_{k=0}^{\infty} \left(2^{1-2k} - 1\right) \frac{h^{2k} B_{2k}}{(2k)!} \frac{d^{2k}}{dt^{2k}} \mathbb{L}([x(t)], h) dt,
    \end{split}
\end{align}
where $B_{k}$ are Bernoulli numbers with $B_{0} = 1$, $B_{2} = \frac{1}{6}$ and
$
    B_{n} = \lim\limits_{x \rightarrow0} \frac{d^{n}}{dx^{n}}\left(\frac{x}{e^x-1}\right)
$. From \eqref{eq:modL}, it follows that
\begin{equation}\label{eq:Lmod([x])}
    \int_{t_{0}}^{t_{N}} \mathbb{L}_{\text{mod}}(x, \dot{x}, h) dt
    = \int_{t_{0}}^{t_{N}} dt \,\left( \mathbb{L}([x], h)
    - \frac{h^2}{24} \frac{d^2}{dt^2} \mathbb{L}([x], h)
    + \frac{7h^4}{5760} \frac{d^4}{dt^4} \mathbb{L}([x], h)
    + \cdots \right).
\end{equation}
Solve $\ddot{x} = \widetilde{f}(x, \dot{x}, h)$ implicitly from the modified Euler--Lagrange equation \eqref{eq:mod_EL}
\[
    \frac{\partial L_{\text{mod}}}{\partial x}(x, \dot{x}, h) - \frac{d}{dt}\frac{\partial L_{\text{mod}}}{\partial \dot{x}}(x, \dot{x}, h) = 0,
\]
and substitute it into \eqref{eq:Lmod([x])}, eliminating $\ddot{x}$ and its higher-order derivatives in the right-hand side of \eqref{eq:Lmod([x])}.
Comparing the coefficients of the term $h^k$ on both sides of \eqref{eq:Lmod([x])}, we obtain
\begin{align*}
    h^1: \quad &L^{[1]}(x, \dot{x}) = L_1(x, \dot{x}),\\
    h^2: \quad &L^{[2]}(x, \dot{x}) = L_2(x, \dot{x}, f,  \partial_{x}f\dot{x} + \partial_{\dot{x}}ff) 
    - \frac{1}{24}\left(
    L_{xx}\left(\dot{x}, \dot{x}\right)
    + 2L_{x\dot{x}}\left(\dot{x}, f\right)
    + L_{\dot{x}\dot{x}}\left(f, f\right)
    + L_{x}f
    + L_{\dot{x}}\left(\partial_{x}f\dot{x} + \partial_{\dot{x}}f\,f \right)\right),\\
    & \cdots \cdots
\end{align*}
where elementary differentials are used to represent the higher-order derivatives, as described in \cite{hairer2006geometric}.
Recursively, we can get the modified Lagrangian in formal series corresponding to the time step $h$.
    
For the Kepler problem \eqref{eq:Kepler}, we split $\phi(x)$ as two parts $\phi = \phi^{[1]} + \phi^{[2]}$. Then, we can construct a discrete Lagrangian of first order as
\begin{align}\label{eq:La1}
    \mathbb{L}(x_{n}, x_{n+1}, h) = \frac{1}{2} \frac{(x_{n+1} - x_{n})^2}{h^2} - \left[ \phi^{[1]}(x_{n+1}^1, x_{n}^2) + \phi^{[2]}(x_{n+1}^1, x_{n+1}^2) \right].
\end{align}
The variational integrator that follows from the discrete Euler-Lagrange equation is
	\begin{align*}
		\frac{x_{n+1} - 2 x_{n} + x_{n-1}}{h^2} = -
		\left[
		\begin{array}{l}
			\frac{\partial }{\partial {x_1}} \left( \phi^{[1]}(x_{n}^1, x_{n-1}^2) + \phi^{[2]}(x_{n}^1, x_{n}^2) \right) \\
			\frac{\partial }{\partial {x_2}} \left( \phi^{[1]}(x_{n+1}^1, x_{n}^2) + \phi^{[2]}(x_{n}^1, x_{n}^2) \right)
		\end{array}
		\right].
	\end{align*}
The discrete Lagrangian of second order is constructed as
	\begin{align}\label{eq:La2}
		\mathbb{L}^{\text{2nd}}(x_{n}, x_{n+1}, h) = \frac{1}{2}\mathbb{L}^*(x_{n}, x_{n+1/2}, h/2) + \frac{1}{2}\mathbb{L}^{\text{1st}}(x_{n+1/2}, x_{n+1}, h/2),
	\end{align}
where $\mathbb{L}^{*}$ is the adjoint Lagrangian of $\mathbb{L}^{\text{1st}}$. By the discrete Euler-Lagrange equations, the corresponding second-order variational integrator is expressed as
\begin{align*}
    \begin{split}
        \frac{x_{n+1/2} - 2 x_{n} + x_{n-1/2}}{(h/2)^2} &= -\left[\begin{array}{c}
	0 \\
        \frac{\partial }{\partial {x_2}} \left( \phi^{[1]}(x_{n+1/2}^1, x_{n}^2) + \phi^{[1]}(x_{n-1/2}^1, x_{n}^2) \right)
	\end{array}\right]\\
        \frac{x_{n+1} - 2 x_{n+1/2} + x_{n}}{(h/2)^2} &= - \left[\begin{array}{c}
        \frac{\partial }{\partial {x_1}} \left( \phi^{[1]}(x_{n+1/2}^1, x_{n}^2) + 2 \phi^{[2]}(x_{n+1/2}^1, x_{n+1/2}^2) + \phi^{[1]}(x_{n+1/2}^1, x_{n+1}^2) \right) \\
        \frac{\partial }{\partial {x_2}} \left( 2 \phi^{[2]}(x_{n+1/2}^1, x_{n+1/2}^2) \right)
	\end{array}\right].
    \end{split}
\end{align*}	
	
According to the above process, we obtain the modified Lagrangian for \eqref{eq:La1} and \eqref{eq:La2} in the following proposition.

\begin{theorem}\label{Thm:Mod_Lag}
The modified Lagrangians  for the newly constructed discrete Lagrangian of order 1 \eqref{eq:La1} and order 2 \eqref{eq:La2} are given as
\[
    \mathbb{L}_{\text{mod}}^{\text{1st}}(x, \dot{x}, h)
    = \frac{1}{2}|\dot{x}|^2 - \phi(x)
    + \frac{h}{2} \left( \frac{\partial \phi}{\partial x_1} \dot{x}_1 + \frac{\partial (\phi^{[2]} - \phi^{[1]})}{\partial x_2} \dot{x}_2 \right)
    + \mathcal{O}(h^2)
\]
and
\begin{align*}
    \mathbb{L}_{\text{mod}}^{\text{2nd}}(x, \dot{x}, h) &= \frac{1}{2}|\dot{x}|^2 - \phi(x)
    + \frac{h^2}{96} \left(
    7\left(\frac{\partial \phi}{\partial x_{1}}\right)^2
    - 5\left(\frac{\partial \phi^{[1]}}{\partial x_2}\right)^2
    + 2\frac{\partial \phi^{[1]}}{\partial x_{2}}\frac{\partial \phi^{[2]}}{\partial x_{2}}
    + 7\left(\frac{\partial \phi^{[2]}}{\partial x_2}\right)^2
    \right)\\
    &+ \frac{h^2}{24} \left(
    - 2\frac{\partial^2 \phi}{\partial x_{1}^2} \dot{x}_{1}^2
    + 2\frac{\partial^2 \phi^{[1]}}{\partial x_1 \partial x_2} \dot{x}_{1} \dot{x}_{2}
    +  \frac{\partial^2 \phi^{[1]}}{\partial x_{2}^2} \dot{x}_{2}^2
    - 4\frac{\partial^2 \phi^{[2]}}{\partial x_{1} \partial x_{2}} \dot{x}_{1} \dot{x}_{2}
    - 2\frac{\partial^2 \phi^{[2]}}{\partial x_{2}^2} \dot{x}_2^2
    \right)
    + \mathcal{O}(h^4).
\end{align*}
\end{theorem}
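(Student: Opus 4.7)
The plan is to apply the systematic procedure for modified Lagrangians outlined in equations \eqref{eq:Lmod}--\eqref{eq:Lmod([x])}, which reduces the task to (i) expanding the discrete Lagrangian around a midpoint to obtain the formal series $\mathbb{L}([x], h) = L + h L_1 + h^2 L_2 + \cdots$, then (ii) matching coefficients in $\int \mathbb{L}_{\text{mod}}\, dt = \int(\mathbb{L}([x], h) - \tfrac{h^2}{24}\tfrac{d^2}{dt^2}\mathbb{L}([x], h) + \cdots)\,dt$ to extract $L^{[k]}$, and (iii) using the Euler--Lagrange equation $\ddot{x} = -\nabla\phi$ to eliminate $\ddot{x}$ and higher derivatives.

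For the first-order case, I would set $s = t_n + h/2$, write $x_n = x(s - h/2)$ and $x_{n+1} = x(s + h/2)$, and Taylor expand the three relevant components: $x_{n+1}^1 = x^1 + \tfrac{h}{2}\dot{x}^1 + \tfrac{h^2}{8}\ddot{x}^1 + \cdots$, $x_n^2 = x^2 - \tfrac{h}{2}\dot{x}^2 + \tfrac{h^2}{8}\ddot{x}^2 - \cdots$, and $x_{n+1}^2 = x^2 + \tfrac{h}{2}\dot{x}^2 + \tfrac{h^2}{8}\ddot{x}^2 + \cdots$. Substituting these into \eqref{eq:La1} and expanding $\phi^{[1]}(x_{n+1}^1, x_n^2)$ and $\phi^{[2]}(x_{n+1}^1, x_{n+1}^2)$ around $x(s)$, the kinetic part contributes $\tfrac{1}{2}|\dot{x}|^2$ at leading order (odd terms cancel), while the potential part produces an order-$h$ term $\tfrac{h}{2}\bigl(\tfrac{\partial\phi}{\partial x_1}\dot{x}_1 + \tfrac{\partial(\phi^{[2]} - \phi^{[1]})}{\partial x_2}\dot{x}_2\bigr)$ from the asymmetric arguments of $\phi^{[1]}$. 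Since $L^{[1]} = L_1$ by matching at order $h$, this gives the first claim of the theorem.

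For the second-order case, I would exploit that the symmetric adjoint construction \eqref{eq:La2} cancels all odd powers of $h$, so $L^{[1]} \equiv 0$ and the leading correction appears at $h^2$. I would expand both $\mathbb{L}^{*}(x_n, x_{n+1/2}, h/2)$ and $\mathbb{L}^{\text{1st}}(x_{n+1/2}, x_{n+1}, h/2)$ around $s$ (treating $x_{n+1/2}$ as $x(s)$ at leading order and tracking its implicit corrections), collect all $h^2$ contributions to form $L_2$, and then apply the matching formula $L^{[2]} = L_2 - \tfrac{1}{24}\bigl(L_{xx}(\dot{x},\dot{x}) + 2 L_{x\dot{x}}(\dot{x}, \ddot{x}) + L_{\dot{x}\dot{x}}(\ddot{x}, \ddot{x}) + L_x \ddot{x} + L_{\dot{x}} x^{(3)}\bigr)$. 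The Euler--Lagrange relation $\ddot{x} = -\nabla \phi$ and its time derivative $x^{(3)} = -\partial_x(\nabla\phi)\dot{x}$ then convert the derivatives into products of first and second partials of $\phi^{[1]}, \phi^{[2]}$, which after regrouping produce the coefficients $\tfrac{7}{96}, -\tfrac{5}{96}, \tfrac{2}{96}, \tfrac{7}{96}$ on the squared-gradient terms and the $\tfrac{1}{24}$ coefficients on the $\dot{x}_i \dot{x}_j$ terms.

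The main obstacle is bookkeeping. The splitting $\phi = \phi^{[1]} + \phi^{[2]}$ destroys the natural symmetry between the two coordinates, so each of the three mixed arguments $\phi^{[1]}(x_{n+1/2}^1, x_n^2)$, $\phi^{[1]}(x_{n+1/2}^1, x_{n+1}^2)$, and $\phi^{[2]}(x_{n+1/2}^1, x_{n+1/2}^2)$ in $\mathbb{L}^{\text{2nd}}$ contributes independently, and after eliminating $\ddot{x}$ one obtains a large collection of terms of the form $(\partial \phi^{[i]}/\partial x_k)(\partial \phi^{[j]}/\partial x_\ell)$ and $(\partial^2 \phi^{[i]}/\partial x_k \partial x_\ell)\dot{x}_k \dot{x}_\ell$. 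Verifying that these collapse to exactly the compact form stated, with the $x_1$-derivatives of $\phi^{[1]}$ and $\phi^{[2]}$ combining into $\tfrac{\partial \phi}{\partial x_1}$ while the $x_2$-derivatives remain split, is the delicate algebraic step; the rest of the proof is a mechanical recursion.
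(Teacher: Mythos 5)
Your proposal follows essentially the same route as the paper's own proof: expand the discrete Lagrangian about the midpoint $x(t+h/2)$, apply the correction formula \eqref{eq:Lmod([x])}, and then eliminate $\ddot{x}$ and higher derivatives via the leading-order Euler--Lagrange relation $\ddot{x} = -\nabla\phi(x) + \mathcal{O}(h^2)$ before matching coefficients in $h$. The only difference is presentational — the paper carries out the expansion explicitly just for the second-order Lagrangian \eqref{eq:La2} and treats the first-order case as analogous, while you sketch both — so this is the same method, not a different one.
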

	
\begin{proof}
Without loss of generality, we compute the modified Lagrangian for the second-order discrete Lagrangian \eqref{eq:La2}. Expanding \eqref{eq:La2} around \(x(t+h/2)\), we derive
\begin{align*}
    \mathbb{L}^{\text{2nd}} \left([x], h\right)
    &= \frac{1}{4} \left[ \frac{(x(t+h/2) - x(t))^2}{(h/2)^2} + \frac{(x(t+h) - x(t+h/2))^2}{(h/2)^2} \right]
    - \frac{1}{2} \Big[\phi^{[1]}\left(x_{1}\left(t\right), x_{2}\left(t+h/2\right)\right)\\
    &+ \phi^{[1]}\left(x_{1}(t+h), x_{2}(t+h/2)\right)
    + \phi^{[2]}\left(x_{1}(t), x_{2}(t)\right)
    + \phi^{[2]}\left(x_{1}(t+h), x_{2}(t+h)\right) \Big]\\
    &= \frac{1}{2}|\dot{x}|^2 - \phi(x)
    + \frac{h^2}{96} \left(
    3|\ddot{x}|^2
    + 4 \langle \dot{x}, x^{(3)} \rangle
    - 12 \frac{\partial \phi^{[1]}}{\partial x_{2}} \ddot{x}_{2}
    - 12 \frac{\partial^2 \phi^{[1]}}{\partial x_2^2} \dot{x}_{2}^2
    - 12 \frac{\partial \phi^{[2]}}{\partial x} \ddot{x}
    - 12 \frac{\partial^2 \phi^{[2]}}{\partial x^2} \dot{x}^2
    \right)\\
    &+ \mathcal{O}(h^4).
\end{align*}
Thus, from \eqref{eq:Lmod([x])}, we have
\begin{align}\label{eq:Lmod[x]_2nd}
    \begin{split}
        \mathbb{L}^{\text{2nd}}_{\text{mod}}(x,\dot{x}, h)
        =& \mathbb{L}^{\text{2nd}}([x,h])
        - \frac{h^2}{24} \frac{d^2}{dt^2} \mathbb{L}^{\text{2nd}}([x],h) + \mathcal{O}(h^4) \\
        =& \frac{1}{2}|\dot{x}|^2 - \phi(x) + \frac{h^2}{96} \Big(
        - |\ddot{x}|^2
        - 8\frac{\partial \phi}{\partial x_1} \ddot{x}_1
        + 4\frac{\partial \phi^{[1]}}{\partial x_{2}} \ddot{x}_{2}
        - 8\frac{\partial \phi^{[2]}}{\partial x_{2}} \ddot{x}_{2}
        - 8\frac{\partial^2 \phi}{\partial x_1^2} \dot{x}_1^2  \\
        &+ 8\frac{\partial^2 \phi^{[1]}}{\partial x_1 \partial x_2} \dot{x}_1 \dot{x}_2
        + 4\frac{\partial^2 \phi^{[1]}}{\partial x_{2}^2} \dot{x}_{2}^2
        - 16\frac{\partial^2 \phi^{[2]}}{\partial x_{1}\partial x_{2}} \dot{x}_1 \dot{x}_2
        - 8\frac{\partial^2 \phi^{[2]}}{\partial x_{2}^2} \dot{x}_{2}^2 \Big)
        + \mathcal{O}(h^4).
    \end{split}
\end{align}
By the Euler-Lagrange equation with \(\mathbb{L}^{\text{2nd}}_{\text{mod}}\) in expression \eqref{eq:Lmod[x]_2nd},
$$
    \frac{\partial \mathbb{L}^{\text{2nd}}_{\text{mod}}(x,\dot{x}, h)}{\partial x} - \frac{d}{dt}\frac{\partial \mathbb{L}^{\text{2nd}}_{\text{mod}}(x,\dot{x}, h)}{\partial \dot{x}} = 0,
$$
we obtain the modified equation in series: \(\ddot{x} = - \nabla \phi(x) + \mathcal{O}(h^2)\). Substituting \(\ddot{x}\) into \eqref{eq:modL} gives the results recursively.
\end{proof}
	
The Kepler problem \eqref{eq:Kepler} has three conserved quantities: energy, angular momentum, and the Laplace--Runge--Lenz (LRL) vector. By means of backward error analysis \cite{hairer2006geometric}, it is known that the variational integrators presented above can ensure that energy and angular momentum remain bounded over long times. In the following, we investigate how the newly derived variational integrators \eqref{eq:La1} and \eqref{eq:La2} preserve the LRL vector through their modified Lagrangian. When the initial energy $H_0 < 0$, the solution of the Kepler problem is elliptic, such as the motion of one astronomical body around another. In this case, the LRL vector determines the orientation and shape of the elliptical orbit. Specifically, the magnitude $\left| \mathbf{A} \right| = \sqrt{A_1^2 + A_2^2}$ is proportional to the eccentricity of the orbit, while the vector lies along the major axis of the orbit. The angle relative to the first coordinate axis is given by $\omega = \arctan\left(\frac{A_2}{A_1}\right)$, providing a precise description of the orientation.

For the perturbed Kepler problem, assume that the corresponding Lagrangian is given by $\widetilde{L} = L(x, \dot{x}) + \varepsilon \overline{L}(x, \dot{x})$, and we state the following theorem.
	
\begin{theorem}\label{Thm:omega&e}
If the major axis of the orbit is $\mathcal{O}(\varepsilon)$-close to the $x_2$-axis, the variations in the eccentricity and the angle of the LRL vector over a period are given by
\begin{equation}\label{eq:ecc}
    \Delta \left| \mathbf{A} \right| = - \varepsilon T \left[\langle \operatorname{EL}(\overline{L}), v_{A_2} \rangle \right] + \mathcal{O}(\varepsilon^2)
\end{equation}
and the change in the angle $\omega$ is given by
\begin{equation}\label{eq:angle}
    \Delta \omega = \frac{\varepsilon T}{e} \left[\langle \operatorname{EL}(\overline{L}), v_{A_1} \rangle \right] + \mathcal{O}(\varepsilon^2),
\end{equation}
where $T$ is the period of the solution to the Kepler problem, and $v_{A_{i}}$ are the characteristics of the LRL vector presented in Proposition \ref{GVF_Kepler}. Here $[\quad ]$ denotes the time average over one period $T$.
\end{theorem}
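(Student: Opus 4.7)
The plan is to reduce the statement to an application of Proposition \ref{PerGVF} to each of the two LRL characteristics $\mathbf{v}_{A_1},\mathbf{v}_{A_2}$, then convert componentwise time derivatives into time derivatives of the polar data $(|\mathbf{A}|,\omega)$ of the LRL vector, and finally integrate over one unperturbed period. Concretely, I would first apply Proposition \ref{PerGVF} with $P=A_i$ and $\mathbf{v}=\mathbf{v}_{A_i}$ to obtain
\[
\frac{dA_i}{dt} = -\varepsilon \langle \operatorname{EL}(\overline{L}),\mathbf{v}_{A_i}\rangle, \qquad i=1,2,
\]
which holds along the perturbed trajectory.

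Next, I would parametrize the LRL vector in polar form $A_1=|\mathbf{A}|\cos\omega$, $A_2=|\mathbf{A}|\sin\omega$ and differentiate:
\[
\frac{d|\mathbf{A}|}{dt}=\cos\omega\,\dot{A}_1+\sin\omega\,\dot{A}_2,\qquad
\frac{d\omega}{dt}=\frac{-\sin\omega\,\dot{A}_1+\cos\omega\,\dot{A}_2}{|\mathbf{A}|}.
\]
The hypothesis that the major axis is $\mathcal{O}(\varepsilon)$-close to the $x_2$-axis means $\omega=\pi/2+\mathcal{O}(\varepsilon)$, so $\sin\omega=1+\mathcal{O}(\varepsilon^2)$, $\cos\omega=\mathcal{O}(\varepsilon)$, and $|\mathbf{A}|=e+\mathcal{O}(\varepsilon)$. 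Since $\dot{A}_i$ is already of order $\varepsilon$ by Step 1, the cross-terms contribute only to $\mathcal{O}(\varepsilon^2)$, yielding
\[
\frac{d|\mathbf{A}|}{dt} = -\varepsilon\,\langle\operatorname{EL}(\overline{L}),\mathbf{v}_{A_2}\rangle + \mathcal{O}(\varepsilon^2),\qquad
\frac{d\omega}{dt} = \frac{\varepsilon}{e}\,\langle\operatorname{EL}(\overline{L}),\mathbf{v}_{A_1}\rangle + \mathcal{O}(\varepsilon^2).
\]

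Finally, I would integrate from $0$ to $T$, where $T$ is the unperturbed Kepler period. Because the integrand is already of order $\varepsilon$, evaluating the right-hand sides on the unperturbed elliptic orbit of period $T$ rather than on the true perturbed orbit introduces only an $\mathcal{O}(\varepsilon^2)$ error. Expressing the integrals as $T$ times the time average over one period then gives the claimed expressions for $\Delta|\mathbf{A}|$ and $\Delta\omega$.

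The main obstacle I anticipate is book-keeping the order of perturbation carefully: one must verify that replacing the perturbed trajectory and the perturbed period by their unperturbed counterparts in the definition of the time average is indeed an $\mathcal{O}(\varepsilon^2)$ modification, and that the near-alignment hypothesis $\omega=\pi/2+\mathcal{O}(\varepsilon)$ is strong enough to discard the $\cos\omega\,\dot A_1$ and $\cos\omega\,\dot A_2$ contributions without losing a leading order term (in particular, the $1/e$ factor in $\Delta\omega$ requires the eccentricity to be bounded away from zero, so that the polar parametrization of $\mathbf{A}$ is nondegenerate). Everything else is a direct consequence of Proposition \ref{PerGVF} and elementary calculus.
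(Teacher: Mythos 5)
Your proposal is correct and follows essentially the same route as the paper's proof: apply Proposition \ref{PerGVF} with the characteristics $\mathbf{v}_{A_1},\mathbf{v}_{A_2}$ to get $\dot A_i=-\varepsilon\langle\operatorname{EL}(\overline{L}),\mathbf{v}_{A_i}\rangle$, differentiate $|\mathbf{A}|$ and $\omega=\arctan(A_2/A_1)$, use $A_1=\mathcal{O}(\varepsilon)$, $A_2\approx e$ to discard the $\mathcal{O}(\varepsilon^2)$ cross-terms, and integrate over one period. Your extra remarks on evaluating the averages along the unperturbed orbit and on $e$ being bounded away from zero are sound bookkeeping points that the paper leaves implicit.
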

	
\begin{proof}
We choose coordinates such that $A_1 = \mathcal{O}(\varepsilon)$ and $A_2 \approx e$. Using the Euler-Lagrange operator, the perturbed Kepler problem has the form $\operatorname{EL}(\widetilde{L}) = 0$. By Theorem \ref{PerGVF}, we have
\begin{align}
    \frac{d A_{i}}{dt} = - \varepsilon \left\langle \operatorname{EL}(\overline{L}), v_{A_{i}} \right\rangle.
\end{align}
To calculate the variations in eccentricity and the angle of the LRL vector, we obtain
\begin{align*}
    \frac{d}{dt}\left| {\bf A} \right| &= \frac{d}{dt} \left( \sqrt{A_1^2 + A_2^2} \right) = \frac{1}{\sqrt{A_1^2 + A_2^2}} \left( A_1 \frac{d A_1}{dt} + A_2 \frac{d A_2}{dt} \right),\\
    \dot{\omega}
    &= \frac{d}{dt} \left( \arctan \frac{A_2}{A_1} \right) = \frac{1}{A_1^2 + A_2^2} \left( A_1 \frac{d A_2}{dt} - A_2 \frac{d A_1}{dt} \right).
\end{align*}
Therefore, we obtain
\begin{align*}
    \frac{d}{dt}\left|{\bf A}\right|
    &= - \varepsilon \left\langle \operatorname{EL}(\overline{L}), v_{A_{2}} \right\rangle
    + \mathcal{O}(\varepsilon^2),\\
    \dot{\omega}
    &= \frac{\varepsilon}{e} \left\langle \operatorname{EL}(\overline{L}), v_{A_{1}} \right\rangle
    + \mathcal{O}(\varepsilon^2).
\end{align*}
This completes the proof by integrating both sides with respect to time $t$ over one period.
\end{proof}
	
\begin{theorem}\label{Thm:Modified_Lagrangian}
For the symplectic Euler method, the errors in eccentricity and the angle of the LRL vector over one period are of order $2$.
For the newly derived discrete Lagrangian of order 1 in \eqref{eq:La1}, if $\phi^{[1]} = \phi^{[2]}$ is taken, then the errors in eccentricity and the angle of the LRL vector are also of order 2.
\end{theorem}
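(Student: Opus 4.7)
The plan is to combine Theorem \ref{Thm:omega&e} with the modified Lagrangians of Theorem \ref{Thm:Mod_Lag}, identifying the perturbation parameter $\varepsilon$ with the stepsize $h$. In each case the statement reduces to showing that the period averages $[\langle\operatorname{EL}(L^{[1]}),v_{A_1}\rangle]$ and $[\langle\operatorname{EL}(L^{[1]}),v_{A_2}\rangle]$ both vanish on the unperturbed Kepler orbit, where $L^{[1]}$ denotes the $O(h)$ correction term of the relevant modified Lagrangian.

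For the symplectic Euler method, expanding the discrete Lagrangian $\tfrac{1}{2}|(x_{n+1}-x_n)/h|^2-\phi(x_n)$ around the midpoint $t+h/2$ (as in the derivation preceding Theorem \ref{Thm:Mod_Lag}) gives $L^{[1]}_{\mathrm{sE}}=\tfrac{1}{2}\nabla\phi\cdot\dot x=\tfrac{1}{2}\,d\phi/dt$, a total time derivative. Since the Euler--Lagrange operator annihilates total derivatives, $\operatorname{EL}(L^{[1]}_{\mathrm{sE}})\equiv 0$, and Theorem \ref{Thm:omega&e} yields $\Delta|\mathbf{A}|,\Delta\omega=O(h^2)$ immediately. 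For the newly derived first-order method with $\phi^{[1]}=\phi^{[2]}=\phi/2$, the formula of Theorem \ref{Thm:Mod_Lag} collapses to $L^{[1]}=\tfrac{1}{2}\partial_1\phi\,\dot x_1$, which is \emph{not} a total derivative. A direct application of $\operatorname{EL}$ yields $\operatorname{EL}(L^{[1]})=\tfrac{1}{2}\partial_{12}\phi\,(\dot x_2,-\dot x_1)^{\top}$, and for $\phi=-1/|x|$ one has $\partial_{12}\phi=-3x_1x_2/|x|^5$.

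It then remains to show that $\int_0^T \langle\operatorname{EL}(L^{[1]}),v_{A_i}\rangle\,dt=0$ for $i=1,2$. With the major axis along the $x_2$-axis (as assumed in Theorem \ref{Thm:omega&e}), I would parametrize the elliptic orbit by the true anomaly $\phi$ measured from perihelion, using the closed forms $x_1=-\sin\phi/(1+e\cos\phi)$, $x_2=\cos\phi/(1+e\cos\phi)$, $\dot x_1=-\cos\phi-e$, $\dot x_2=-\sin\phi$, together with the Keplerian time change $dt=d\phi/(1+e\cos\phi)^2$. After substitution, using $\sin^2\phi+\cos^2\phi=1$ and $2\cos^2\phi=1+\cos 2\phi$, the $(1+e\cos\phi)^{-k}$ factors cancel and each integrand reduces to a constant times $\sin^2\phi\cos\phi$ or $\sin\phi\cos^2\phi$. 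Both are exact $\phi$-derivatives and therefore integrate to zero over $[0,2\pi]$; Theorem \ref{Thm:omega&e} then delivers the claimed $O(h^2)$ bounds.

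The main obstacle is this last cancellation. A straightforward parity argument in $\phi\to-\phi$ handles the integral paired with $v_{A_1}$ (its integrand is odd in $\phi$, hence vanishes by the reflection symmetry of the orbit about its major axis), but the integrand paired with $v_{A_2}$ is \emph{even} in $\phi$. For the latter one has to carry out the explicit algebraic simplification above and recognize that what remains is a total $\phi$-derivative whose period integral vanishes. I expect most of the work to be organizing the expansion of the discrete Lagrangian and the subsequent substitution cleanly enough that this total-derivative structure becomes transparent.
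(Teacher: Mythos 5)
Your proposal is correct and follows essentially the same route as the paper: both treat the $\mathcal{O}(h)$ term of the modified Lagrangian from Theorem \ref{Thm:Mod_Lag} as the perturbation $\varepsilon\overline{L}$ in Theorem \ref{Thm:omega&e}, dispose of the symplectic Euler case because $\dot{x}^{\top}\nabla\phi$ is a total time derivative annihilated by $\operatorname{EL}$, and for \eqref{eq:La1} with $\phi^{[1]}=\phi^{[2]}$ reduce the claim to vanishing period averages of $\langle \operatorname{EL}(\overline{L}), v_{A_i}\rangle$, your $\operatorname{EL}(L^{[1]})=\tfrac12\,\partial_{12}\phi\,(\dot x_2,-\dot x_1)^{\top}$ agreeing with the paper's $\tfrac{3}{|x|^{5}}\left(-x_1x_2\dot x_2,\; x_1x_2\dot x_1\right)^{\top}$ up to the $\varepsilon=h$ versus $\varepsilon=h/2$ normalization. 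The only difference is how the averages are evaluated: you substitute the true-anomaly/hodograph parametrization directly and land on elementary trig integrals such as $\sin^{2}\phi\cos\phi$ and $\sin\phi\cos^{2}\phi$ (which indeed cancel the $(1+e\cos\phi)$ powers and vanish over a period), whereas the paper first rewrites the inner products via the conserved quantities $H$, $A_1$, $A_2$ and then shows $\left[x_1x_2/|x|^{5}\right]=\left[x_1^{2}x_2/|x|^{5}\right]=\left[x_1x_2^{2}/|x|^{5}\right]=0$ using Kepler's laws; both computations are sound.
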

	
\begin{proof}
The symplectic Euler method is associated with a variational integrator, and its modified Lagrangian can be computed as
\[
    \mathbb{L}_{\text{mod}}^{\text{symE}} = \frac{1}{2} \left| \dot{x} \right|^2 - \phi(x) - \frac{h}{2} (\dot{x}^{\top} \nabla \phi(x) + \mathcal{O}(h)).
\]
Setting $\varepsilon = \frac{h}{2}$ and $\overline{L} = - \dot{x}^{\top} \nabla \phi(x) + \mathcal{O}(\varepsilon)$ in Theorem \ref{Thm:omega&e}, we observe that $\operatorname{EL}(\dot{x}^{\top} \nabla \phi(x)) = 0$. Consequently,
\[
    \Delta \left| \mathbf{A} \right| = \mathcal{O}(\varepsilon^2), \quad \Delta \omega = \mathcal{O}(\varepsilon^2).
\]
Thus, for the symplectic Euler method, the variations in both the eccentricity and the angle of the LRL vector over one period are of order 2.

For the newly derived discrete Lagrangian of order 1, it follows from Theorem \ref{Thm:Modified_Lagrangian} that its modified Lagrangian is given by
\[
    \mathbb{L}^{\text{1st}}_{\text{mod}} = \frac{1}{2} \left| \dot{x} \right|^2 - \phi(x) + \frac{h}{2} \frac{x_1 \dot{x}_1}{\left| x \right|^3} + \mathcal{O}(h^2).
\]
Letting $\varepsilon = \frac{h}{2}$ and $\overline{L} = \frac{x_1 \dot{x}_1}{\left| x \right|^3} + \mathcal{O}(\varepsilon)$, we have
\[
    \operatorname{EL}\left( \frac{x_1 \dot{x}_1}{\left| x \right|^3} \right)
    = \frac{3}{|x|^5} \left[ - x_1 x_2 \dot{x}_2, x_1 x_2 \dot{x}_1 \right]^\top.
\]
Next, we compute the inner products between $\operatorname{EL}\left(\frac{x_1 \dot{x}_1}{|x|^3}\right)$ and the components of the LRL vector:
\begin{align*}
    \left\langle \operatorname{EL}\left(\frac{x_1 \dot{x}_1}{|x|^3}\right), v_{A_1} \right\rangle
    &= 6 \frac{H x_1 x_2^2}{|x|^5}
    - 6 \frac{A_2 x_1 x_2}{|x|^5}, \\
    \left\langle \operatorname{EL}\left(\frac{x_1 \dot{x}_1}{|x|^3}\right), v_{A_2} \right\rangle
    &= -6 \frac{H x_1^2 x_2}{|x|^5}
    + 6 \frac{A_1 x_1 x_2}{|x|^5}.
\end{align*}
Transforming to polar coordinates with $x_1 = -r \sin(\theta)$ and $x_2 = r \cos(\theta)$, where $\theta = 0$ aligns with the positive $x_2$ axis, we apply Kepler's laws (see Proposition \ref{Thm:Kepler_3}) to compute the following integral averages over one period:
\[
    \left[ \frac{x_1 x_2}{|x|^5} \right]
    = - \frac{1}{T} \int_0^T \frac{\cos(\theta) \sin(\theta)}{r^3} \, dt
    = - \frac{1}{2\pi b^3} \int_{0}^{2\pi} (1 + e\cos{\theta})\cos(\theta) \sin(\theta) \, d\theta
    = 0.
\]
Similarly, we compute:
\[
    \left[ \frac{x_{1}^2 x_{2}}{|x|^5} \right] = \frac{1}{2\pi ab} \int_{0}^{2\pi} \cos(\theta) \sin^2(\theta) \, d\theta = 0, \quad
    \left[ \frac{x_1 x_2^2}{|x|^5} \right] = - \frac{1}{2\pi ab} \int_{0}^{2\pi} \cos^2(\theta) \sin(\theta) \, d\theta = 0.
\]
Thus, for the discrete Lagrangian of order 1 \eqref{eq:La1}, it follows that the variations in both the eccentricity and the angle of the LRL vector over one period are of order 2.
\end{proof}

In \cite{vermeeren2017modified}, the modified Lagrangian for the St\"{o}rmer--Verlet method is derived as
\[
    \mathbb{L}_{\text{mod}}^{\text{SV}}(x, \dot{x}, h) = \frac{1}{2} |\dot{x}|^2 - \phi(x) + \frac{h^2}{24} \left( 
    \frac{1}{|x|^4}  
    - 2 \frac{|\dot{x}|^2}{|x|^3} 
    + 6 \frac{(x^{\top} \dot{x})^2}{|x|^5}
    \right)
    + \mathcal{O}(h^4).
\]
Additionally, the convergence rate of the angle is investigated in \cite{vermeeren2018numerical}. The following theorem establishes the superconvergence of the error in eccentricity for the St\"{o}rmer--Verlet method.

\begin{theorem}
    The error in eccentricity for the St\"{o}rmer--Verlet method is of order 4.
\end{theorem}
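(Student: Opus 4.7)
The plan is to combine Theorem \ref{Thm:omega&e} with a reflection-symmetry argument for the Kepler ellipse, showing that the would-be leading $h^2$ contribution to $\Delta|\mathbf{A}|$ vanishes by parity, so that the first surviving term is $\mathcal{O}(h^4)$.

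First, I would use the fact that the St\"{o}rmer--Verlet scheme is self-adjoint to conclude that $\mathbb{L}_{\text{mod}}^{\text{SV}}$ contains only even powers of $h$, so it has the form $L + \varepsilon\overline{L} + \mathcal{O}(h^4)$ with $\varepsilon = h^2/24$ and
$$\overline{L} = \frac{1}{|x|^4} - 2\frac{|\dot{x}|^2}{|x|^3} + 6\frac{(x^\top\dot{x})^2}{|x|^5}.$$
Applying Theorem \ref{Thm:omega&e} to $L+\varepsilon\overline{L}$ (the remainder contributes only $\mathcal{O}(h^4)$ to $\Delta|\mathbf{A}|$), the problem reduces to showing that the time average $[\langle\operatorname{EL}(\overline{L}), v_{A_2}\rangle]$ over one period vanishes.

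Second, I would exploit the reflection symmetry of the Kepler orbit. With the major axis aligned with the $x_2$-axis and $t_0$ the instant of perihelion passage, one has $x_1(t_0+s) = -x_1(t_0-s)$ and $x_2(t_0+s) = x_2(t_0-s)$, whence $\dot{x}_1$ is even and $\dot{x}_2$ is odd in $s$. Thus the involution $(x_1,x_2,\dot{x}_1,\dot{x}_2)\mapsto(-x_1,x_2,\dot{x}_1,-\dot{x}_2)$ maps orbit values at $t_0+s$ to those at $t_0-s$. The crucial observation is that $\overline{L}$ depends only on $|x|^2$, $|\dot{x}|^2$ and $(x^\top\dot{x})^2$; the first two are invariant, and $x^\top\dot{x}$ flips sign but is squared, so $\overline{L}$ itself is invariant under the involution.

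Third, from invariance of $\overline{L}$ one reads off the parities of its four first partials (derivative in a sign-flipping variable is odd, in an invariant variable even); since $d/dt$ reverses parity under $s\mapsto -s$, the two components of $\operatorname{EL}(\overline{L})$ acquire opposite parities, odd in index $1$ and even in index $2$. Direct inspection of $v_{A_2,1} = 2x_2\dot{x}_1 - x_1\dot{x}_2$ and $v_{A_2,2} = -x_1\dot{x}_1$ from Proposition \ref{GVF_Kepler} shows that the first is even and the second odd. Consequently $\langle\operatorname{EL}(\overline{L}), v_{A_2}\rangle$ is a sum of products of opposite parity, is therefore odd in $s$, and integrates to zero over one period centered at $t_0$.

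The main obstacle will be the parity bookkeeping in the third step: tracking carefully which partial derivative of $\overline{L}$ is even or odd and how $d/dt$ interacts with these parities, so that no sign is lost. Once this is in place, substitution into Theorem \ref{Thm:omega&e} yields $\Delta|\mathbf{A}| = \mathcal{O}(h^4)$, which is the claimed superconvergence.
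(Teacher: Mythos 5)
Your proposal is correct, and it reaches the conclusion by a genuinely different route than the paper. The paper, after fixing $\varepsilon=h^2/24$ and the same $\overline{L}$, quotes from the literature the explicit expression
$\bigl[\langle \operatorname{EL}(\overline{L}), v_{A_2}\rangle\bigr]=\bigl[60\,x_1/|x|^6+48H\,x_1/|x|^5-30m^2\,x_1/|x|^7\bigr]m+\mathcal{O}(\varepsilon^2)$
and then kills each average $[x_1/|x|^k]$, $k=5,6,7$, by an explicit polar-coordinate integration using Kepler's laws (the integrals $\int_0^{2\pi}(1+e\cos\theta)^{k-3}\sin\theta\,d\theta$ vanish). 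You instead avoid computing $\operatorname{EL}(\overline{L})$ altogether: you observe that $\overline{L}$ depends only on $|x|^2$, $|\dot x|^2$ and $(x^\top\dot x)^2$, hence is invariant under the reflection $(x_1,x_2,\dot x_1,\dot x_2)\mapsto(-x_1,x_2,\dot x_1,-\dot x_2)$ that maps the Kepler orbit at $t_0+s$ to the orbit at $t_0-s$ (with $t_0$ a perihelion passage and the major axis on the $x_2$-axis, consistent with the hypothesis of Theorem \ref{Thm:omega&e}), and your parity bookkeeping is sound: $\partial_{x_1}\overline{L},\partial_{\dot x_2}\overline{L}$ are odd in $s$ and $\partial_{x_2}\overline{L},\partial_{\dot x_1}\overline{L}$ even, $d/dt$ flips parity, so $\operatorname{EL}(\overline{L})_1$ is odd and $\operatorname{EL}(\overline{L})_2$ even, while $v_{A_2}=(2x_2\dot x_1-x_1\dot x_2,\,-x_1\dot x_1)$ is (even, odd); the pairing is therefore odd in $s$ and averages to zero over a period, giving $\Delta|\mathbf{A}|=\mathcal{O}(\varepsilon^2)=\mathcal{O}(h^4)$. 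What each approach buys: the paper's computation is self-contained at the level of formulas and connects directly to the Kepler-law machinery it already set up for Theorem \ref{Thm:Modified_Lagrangian}; your symmetry argument is shorter, less error-prone, and more general, since it shows the leading eccentricity drift vanishes for \emph{any} perturbation $\overline{L}$ invariant under the orbit's reflection symmetry (in particular any rotation-invariant function of $|x|^2$, $|\dot x|^2$, $(x^\top\dot x)^2$), which explains structurally why symmetric methods exhibit this superconvergence. Your step 1 (evenness of $\mathbb{L}_{\text{mod}}^{\text{SV}}$ in $h$ from self-adjointness) is not strictly needed as an independent argument here, since the paper already supplies the expansion with an $\mathcal{O}(h^4)$ remainder, but it is a correct and standard justification.
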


\begin{proof}
    Let $\varepsilon = \frac{h^2}{24}$ and $\overline{L} = \frac{1}{|x|^4}  
    - 2 \frac{|\dot{x}|^2}{|x|^3} 
    + 6 \frac{(x^{\top} \dot{x})^2}{|x|^5} + \mathcal{O}(\varepsilon^2)$. Following the calculation in \cite{vermeeren2017modified}, we obtain the expression
    \[
    \left[\left\langle \operatorname{EL}\left(\frac{1}{|x|^4}  
    - 2 \frac{|\dot{x}|^2}{|x|^3} 
    + 6 \frac{(x^{\top} \dot{x})^2}{|x|^5} \right), v_{A_2} \right\rangle\right]
    = \left[60 \frac{x_1}{|x|^6} + 48H \frac{x_1}{|x|^5} - 30m^2 \frac{x_1}{|x|^7}\right]m + \mathcal{O}\left(\varepsilon^2\right).
    \]
    We now introduce polar coordinates, with $x_1 = -r \sin(\theta)$ and $x_2 = r \cos(\theta)$, and compute the following integral:
    \[
    \left[ \frac{x_1}{|x|^k} \right]
    = - \frac{1}{T} \int_0^T \frac{\sin(\theta)}{r^{k-1}} \, dt
    = - \frac{a^{k-4}}{2\pi b^{2k-5}} \int_0^{2\pi} (1 + e \cos(\theta))^{k-3} \sin(\theta) \, d\theta
    = 0,\quad k=5,\ 6,\ 7.
    \]
    Thus, for the St\"{o}rmer--Verlet method, the variations in eccentricity over one period are of order 4.
\end{proof}

\section{Numerical Experiments}
In this section, we present the numerical results for applying the newly constructed first order and second order discrete Lagrangians \eqref{eq:La1} and \eqref{eq:La2}. We denote them by VI-1 and VI-2, respectively. In comparison, we also use the symplectic Euler (sym-Euler) and the St\"{o}rmer--Verlet (SV) methods. To apply the variational integrators, which are multi-step methods in $x$, we need to specify $x_{1} \approx x(h)$. To do this, we can select $x_{1}$ such that $v_{0} = - h\partial_{1}\mathbb{L}(x_{0}, x_{1})$ by the discrete Legendre transform \ref{def:disLeg}.

Take the initial conditions as $(x_{0}, v_{0}) = \left(1-e, 0, 0, \sqrt{\frac{1+e}{1-e}}\right)$, where $e$ is the eccentricity. In this experiment, we set $e = 0.6$. In Fig.~\ref{orbit_Kepler}, we demonstrate the numerical orbits over the time interval $[0, 200]$ with a time step of $h = 0.05$. As seen in Fig.~\ref{orbit_Kepler}, the orbits calculated by the symplectic Euler and St\"{o}rmer--Verlet methods exhibit significant clockwise rotation. In contrast, the variational integrators of first order and second order \eqref{eq:La1} and \eqref{eq:La2} simulate the elliptic orbit well. The first order variational integrator shows only a slight counter-clockwise precession, while the second order integrator exhibits a very tiny phase shift.
\begin{figure}[H]
	\centering
	\subfigure[]{\includegraphics[scale=0.38]{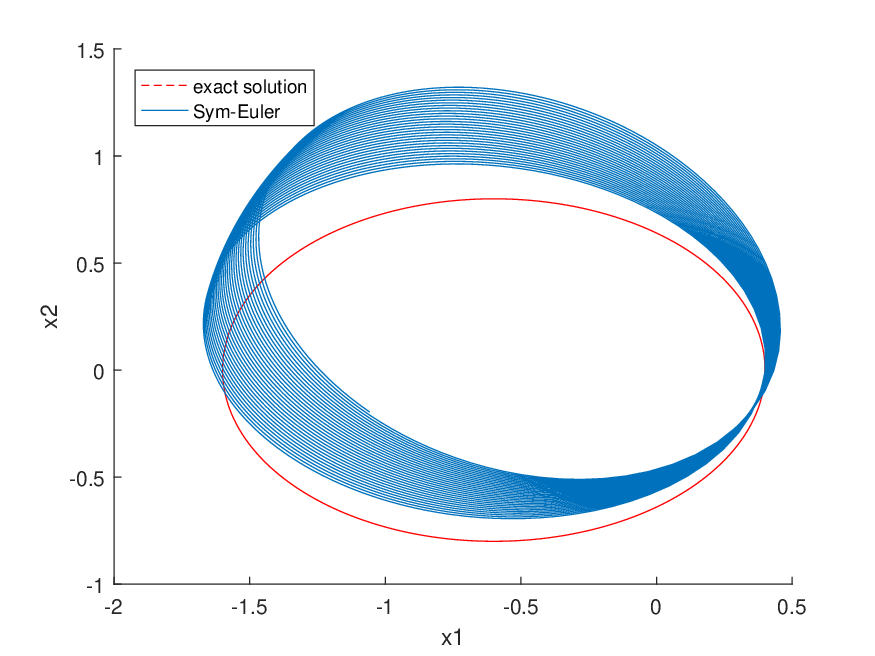}}
	\quad
	\subfigure[]{\includegraphics[scale=0.38]{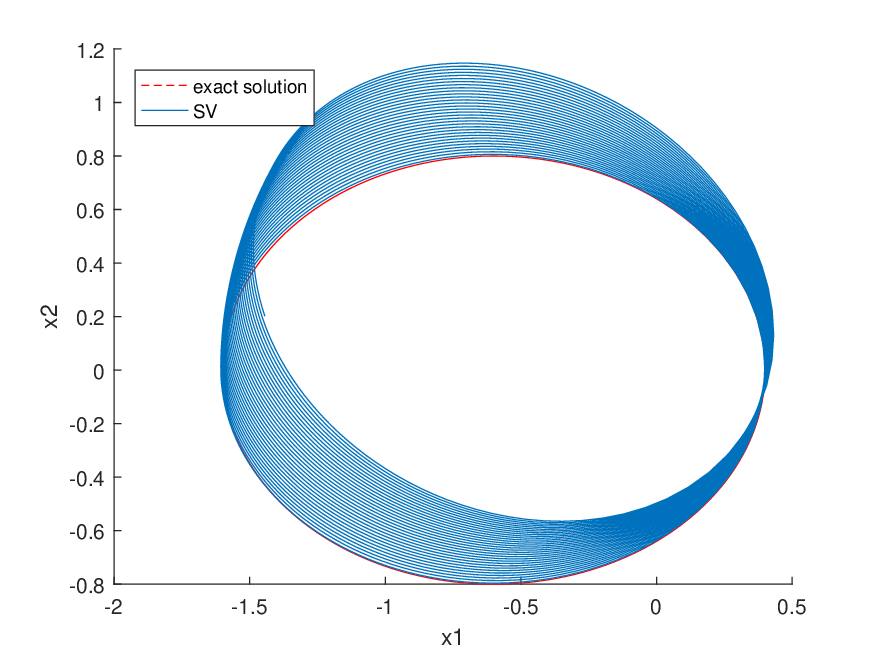}}
	\quad
	\subfigure[]{\includegraphics[scale=0.38]{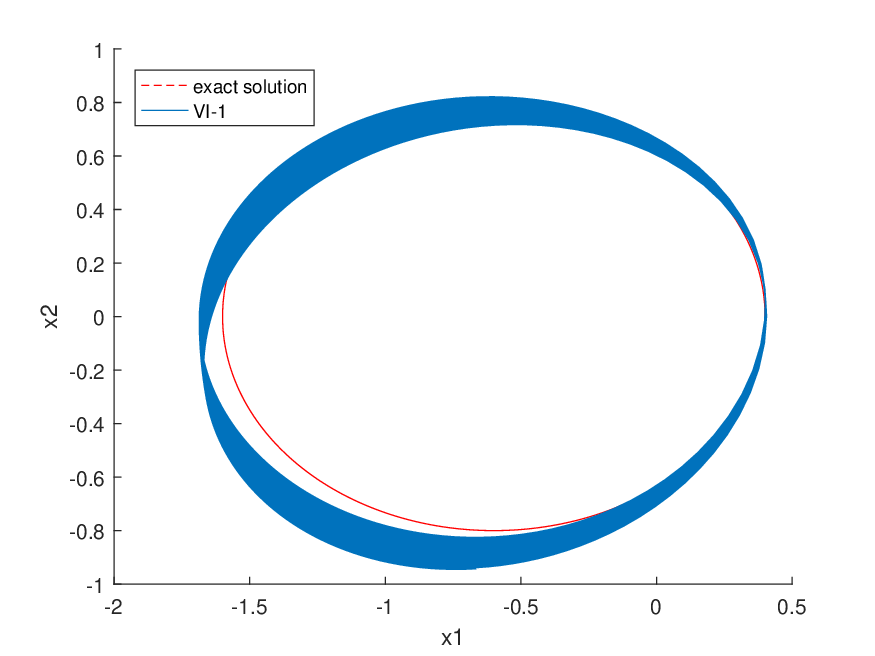}}
	\quad
	\subfigure[]{\includegraphics[scale=0.38]{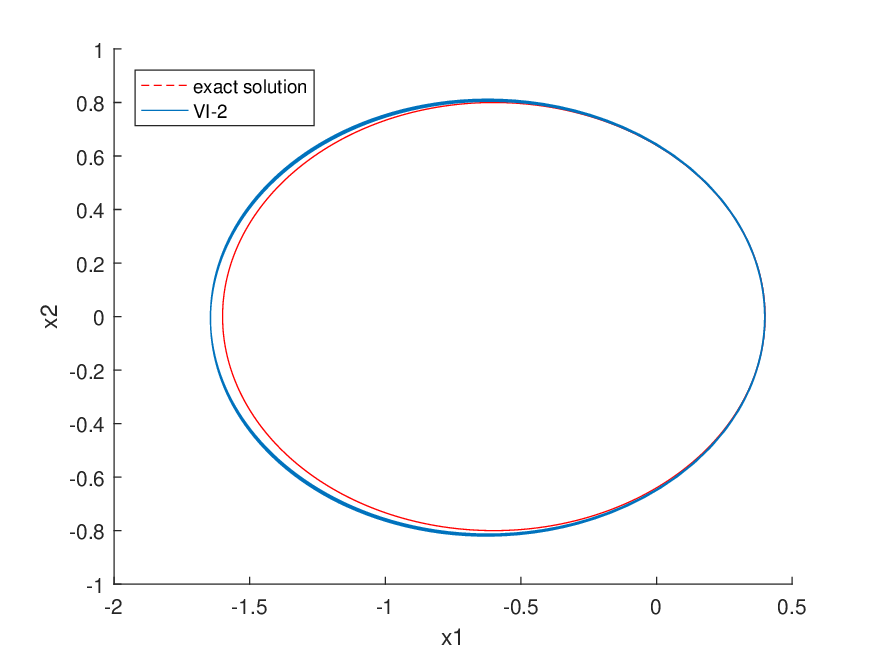}}
	\caption{Numerical solutions with time step $h = 0.05$ over 4000 time steps. Red dots represent the exact solution.}
	\label{orbit_Kepler}
\end{figure}

In this section, in addition to the simulations for numerical orbits, we also plot the errors of conserving quantities: energy, angular momentum, and the Laplace--Runge--Lenz (LRL) vector. Using the initial values $(-3, 0, 0, 0.45)$ and a time step of $h = 0.05$, all numerical computations are made over $10^5$ steps of integration.

As shown in Fig.~\ref{energy_Kepler}, all four numerical methods show bounded energy errors during the simulation. Since the St\"{o}rmer--Verlet method and the second-order variational integrator are both of second order, they exhibit more accurate energy conservation in comparison to the first-order variational integrator and the symplectic Euler method.

\begin{figure}[htbp]
    \centering
    \includegraphics[width=0.65\textwidth]{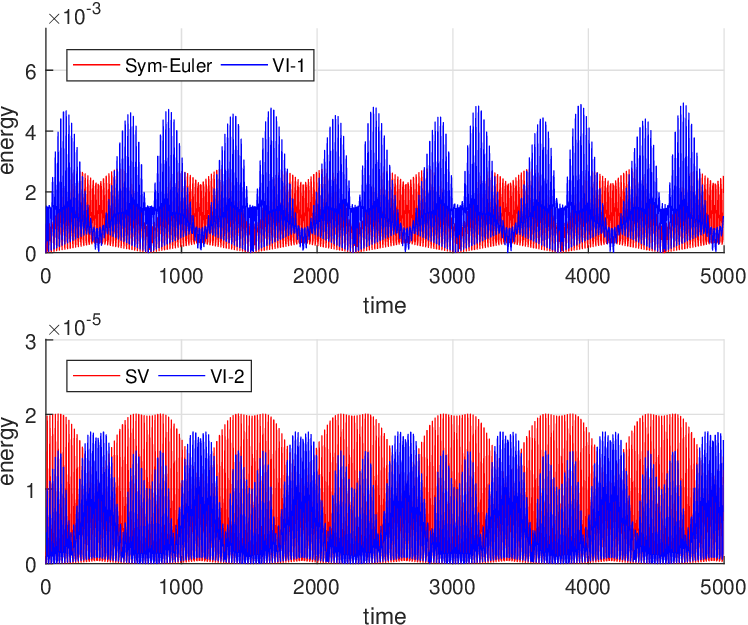}
    \caption{Numerical error of energy. Top plot: The red line shows the error for the symplectic Euler method, and the blue line shows the error for the first order variational integrator. Bottom plot: The red line shows the error for the St\"{o}rmer--Verlet method, and the blue line shows the error for the second order variational integrator.}
    \label{energy_Kepler}
\end{figure}

For the Kepler problem, as the angular momentum is quadratic, it can be preserved exactly by the symplectic Euler method and the St\"{o}rmer--Verlet method. Although the newly constructed discrete Lagrangians cannot exactly preserve the angular momentum, the error in angular momentum remains bounded during the long-term simulation. In Fig.~\ref{ecc_Kepler} and Fig.~\ref{angle_Kepler}, we present the errors of the LRL vector for the four numerical methods. Fig.~\ref{ecc_Kepler} shows the error of the eccentricity, represented by $\left|\mathbf{A}\right| = \sqrt{A_{1}^2 + A_{2}^2}$. From these plots, it can be observed that all four numerical methods keep the eccentricity errors bounded over the simulation time. In comparison with the symplectic Euler method and the St\"{o}rmer--Verlet method, the newly constructed variational integrators result in smaller eccentricity errors. In Fig.~\ref{angle_Kepler}, the error in the rotation angle is shown for the different numerical methods. Both the first order and second order variational integrators show much smaller errors compared to the symplectic Euler and St\"{o}rmer--Verlet methods. Among them, the second order variational integrator performs the best, with the smallest errors over time.

\begin{figure}[htbp]
    \centering
    \subfigure{\includegraphics[width=0.7\textwidth]{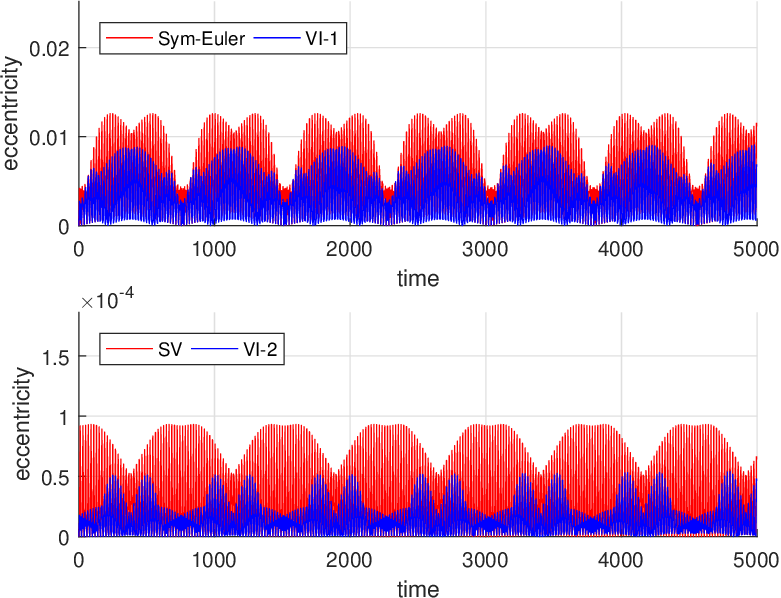}}
    \caption{Error of eccentricity. Top plot: The red line shows the error for the symplectic Euler method, and the blue line shows the error for the first order variational integrator. Bottom plot: The red line shows the error for the St\"{o}rmer--Verlet method, and the blue line shows the error for the second order variational integrator.}
    \label{ecc_Kepler}
\end{figure}

\begin{figure}[htbp]
    \centering
    \subfigure{\includegraphics[width=0.7\textwidth]{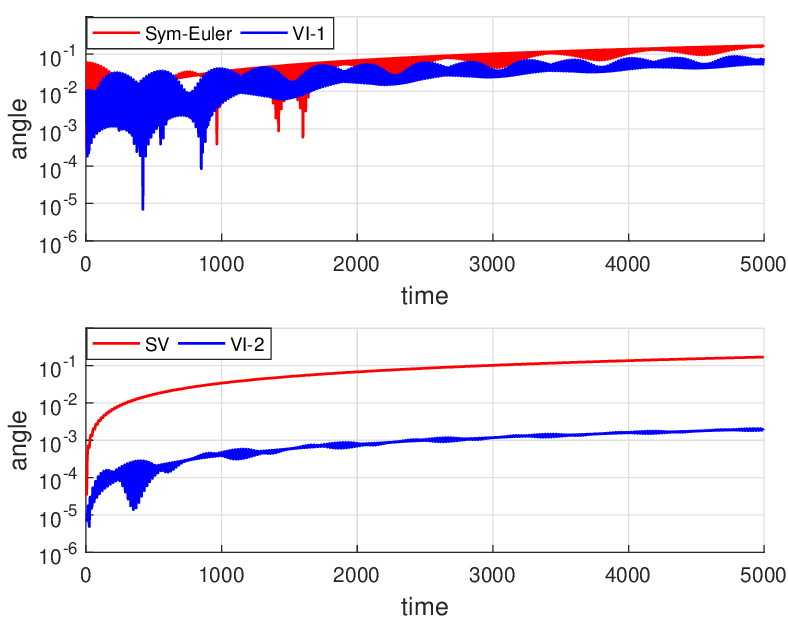}}
    \caption{Error of rotation angle. Top plot: The red line shows the error for the symplectic Euler method, and the blue line shows the error for the first order variational integrator. Bottom plot: The red line shows the error for the St\"{o}rmer--Verlet method, and the blue line shows the error for the second order variational integrator.}
    \label{angle_Kepler}
\end{figure}

In Fig.~\ref{rate_Kepler}, we show the convergence rates of the eccentricity and angle of the LRL vector using the four integration methods over one period of Keplerian motion. To estimate the convergence order, we start with $h = 0.5$. The numerical results are computed with time steps $h_i = 2^{-i}$, for $i=1, \dots, 6$. The results are presented in a log-log plot.
For the eccentricity error shown in Fig.~\ref{rate_Kepler}(a), both the symplectic Euler method and the first order variational integrator exhibit a second order convergence rate, with their error curves nearly identical. The St\"{o}rmer--Verlet method, however, shows super-convergence with an accuracy of order 4. The second order variational integrator also demonstrates super-convergence, and in comparison with the St\"{o}rmer--Verlet method, it produces smaller errors.
In Fig.~\ref{rate_Kepler}(b), we plot the convergence rate for the angle error. From the figure, it can be seen that all four numerical methods have a second order convergence rate. This is consistent with the theoretical estimates in Theorem \ref{Thm:Modified_Lagrangian}. The error curves for the symplectic Euler and St\"{o}rmer--Verlet methods are nearly identical. Both the first and second order variational integrators demonstrate smaller errors in preserving the angle of the LRL vector.
Among the four numerical methods, the second order variational integrator produces the smallest angle error. 

\begin{figure}[htbp]
\centering
\subfigure[]{\includegraphics[width=0.45\textwidth]{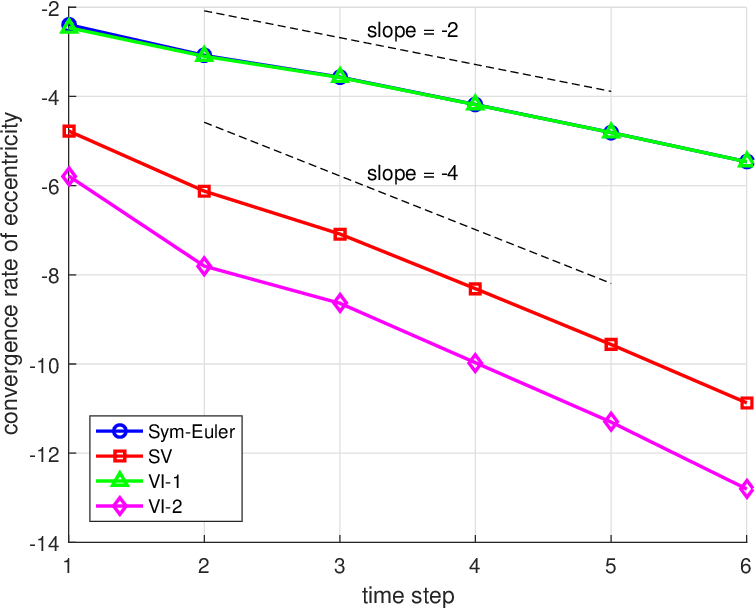}}
\quad
\subfigure[]{\includegraphics[width=0.45\textwidth]{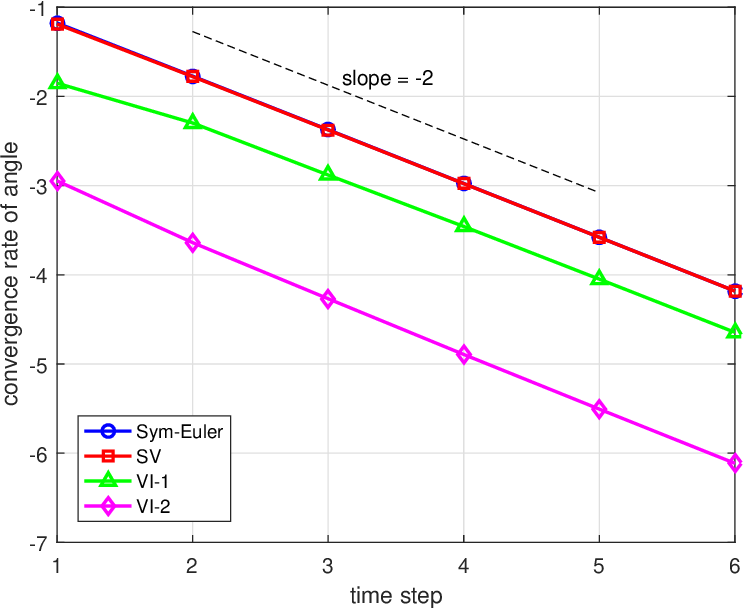}}
\caption{Convergence rates of the Laplace--Runge--Lenz (LRL) vector over one period of motion.
(a) Eccentricity error; (b) Angle error. Solid lines denote reference convergence rates.}
\label{rate_Kepler}
\end{figure}

\section{Conclusion}
In this paper, we have studied a class of second order differential systems \eqref{eq:DE_VP}, which include the classical Newtonian motion system and charged particle systems in physical plasmas. Using the inverse variational technique, we established the conditions under which this class of systems can be reformulated in a variational framework. For the Kepler problem, by splitting the potential function, we constructed variational integrators of first and second order. By introducing the appropriate Legendre transformation, we demonstrated that the variational integrators are equivalent to the composition of explicit numerical methods.
This framework enables us to perform numerical simulations of the Kepler problem more efficiently. The Kepler system, being a super-integrable system, possesses conserved quantities such as energy, angular momentum, and the Laplace--Runge--Lenz (LRL) vector. We applied the newly derived variational integrators to the two-dimensional Kepler problem. Using the method of modified equations, we established the modified Lagrangian, which is generally a formal series in step size $h$. 
Using Noether's Theorem, we presented the corresponding variational symmetries for the conservative laws of the Kepler problem. Through perturbation theory, we analyzed the errors of the first and second order variational integrators in preserving the LRL vector. The numerical errors confirm the numerical behavior observed in the experiments.
The modified Lagrangian technique provides a valuable tool for understanding the numerical behavior of variational integrators. The construction and analysis of various variational integrators for the relativistic Kepler problem and charged particle systems will be explored in future work.

\section*{Acknowledgments}
This research was supported by the National Natural Science Foundation of China No. 12271513.

\bibliographystyle{model1-num-names}

\end{document}